\theoremstyle{plain}
\newcommand{\E}{\mathbb E}
\newcommand{\R}{\mathbb R}
\newcommand\supp{\operatorname{supp}}
\newcommand{\ep}{\epsilon}
\newcommand\lrabs[1]{\left|#1\right|}
\newcommand\Bigset[1]{\ensuremath{\Bigl\{#1\Bigr\}}}
\newcommand\bxt{B^x_t}
\newcommand\og{\overline g}
\newcommand\tO{t_0}
\newcommand\ti{t_1}
\newcommand\dc{\gamma}
\newcommand\frpsimu{\Bigl(\frac{\psi_x}{\mu}\Bigr)}
\newenvironment{example}[1][Example]{\begin{trivlist}
\item[\hskip \labelsep {\bf Example}.]}{\end{trivlist}}
\newenvironment{remark}[1][Remark]{\begin{trivlist}
\item[\hskip \labelsep {\bf Remark}.]}{\end{trivlist}}
\newtheorem{theorem}{Theorem}[section]
\newtheorem{lemma}[theorem]{Lemma}
\newtheorem{corollary}[theorem]{Corollary}
\newtheorem{definition}[theorem]{Definition}
\theoremstyle{definition}
\newenvironment{romenumerate}[1][0pt]{% optional argument changes indentation
\addtolength{\leftmargini}{#1}\begin{enumerate}% gives (i), (ii) etc.
 }{\end{enumerate}}
\xdef\klockan{\the\count1.0\the\count255}
\xdef\klockan{\the\count1.\the\count255}\fi
\title{Feynman--Kac theorems for generalized diffusions}
\author[Erik Ekstr\"om, Svante Janson and Johan Tysk]
{Erik Ekstr\"om$^{1}$, Svante Janson$^2$ and Johan Tysk$^{1}$}
\keywords{Gap diffusions; Feynman--Kac representation theorem; martingales}
\address{Uppsala University, Box 480, 75106 Uppsala, Sweden.}
\date{October 22, 2012} % (typeset \today{} \klockan)}
\thanks{$^1$ Supported by the Swedish Research Council (VR)}
\thanks{$^2$ Supported by the Knut and Alice Wallenberg Foundation}
\begin{document}

\begin{abstract} 
We find Feynman--Kac type representation theorems for generalized diffusions. 
To do this we need to establish existence, uniqueness and regularity results for 
equations with measure-valued coefficients.
\end{abstract}

\maketitle

\section{Introduction}\label{S:intro}
Generalized diffusions, also referred to as gap diffusions,  provide a useful extension of the concept of one-dimensional diffusions:
they allow jumps, but only to neighboring elements in the state space, and thus provide a
 unified framework
for processes on discrete spaces as well as for processes on the real line, see \cite{EH}, \cite{EHJT}, \cite{K}, \cite {KW} and \cite{M}. 
In the next
section of the present paper we present generalized diffusions as time changes, 
using so-called speed measures, of Brownian motion, which is the customary way of defining them. We focus on generalized diffusions that
also are local martingales.  

We establish Kolmogorov backward equations for expected values
$U(x,t)$ of functions of generalized diffusions at time $t$ for processes starting at the point $x$.  The backward equation takes the form
$$2m(dx)U_t(x,t)=U_{xx}(x,t)$$ where $m$ denotes the speed measure. This equation is to be interpreted as an equality in the 
distributional sense. 
When the process is a regular diffusion, $m$ is nothing but the multiplicative inverse of the diffusion coefficient and the equation reduces to the usual backward (heat) equation.
We provide related Feynman--Kac theorems connecting solutions to the backward equation to expected values of functions of generalized
diffusions and vice versa. 
We also study regularity of solutions to the backward equation. A consequence of our results is 
that 
(under suitable conditions) 
the measure $U_{xx}$ is absolutely continuous with
respect to the speed measure, which can be thought of as parabolic regularity in our present setting.

The paper is organized as follows. In Section~\ref{S:construction} we introduce the 
family of generalized diffusions under consideration, and we collect some known properties
from existing literature.
In Section~\ref{S:mainresult} our main result, Theorem~\ref{main}, is formulated. It states that 
there is a unique solution satisfying appropriate growth conditions of the backward equation
corresponding to a generalized diffusion. Moreover, this solution coincides with the
stochastic representation, thus establishing a Feynman--Kac type theorem in our setting.
The proof of Theorem~\ref{main} is contained in Sections~\ref{S:uniqueness} 
and \ref{S:existence}. Section~\ref{S:properties} provides conditions under which
certain properties of the initial condition are inherited by the solution, and 
Section~\ref{S:examples} contains some examples. Finally, Section~\ref{S:regularity}
contains a study of regularity of solutions.

\section{Construction of generalized diffusions}
\label{S:construction}
In this section we construct generalized diffusions as
time changes of Brownian motion, and we discuss some of their properties. 
Most of the contents of this section can also be found in \cite{EHJT}, but for the 
convenience of the reader we include them here.

Let $m$ be a nonnegative Borel measure on $\R$. Note that $m$ is allowed to
be (locally) infinite. 
We exclude the trivial case when $m=0$. 
Let $B$ be a Brownian motion with $B_0=0$ and let
$L_u^y$ be its  local time at the point $y$ up to time $u$. 
Then $\bxt:=x+B_t$ is a Brownian motion starting at $x$, 
and its local time at $y$ is $L^{y-x}_u$.
For a given starting point $x\in\R$ we define the increasing process
\begin{equation}\label{gamma}
\Gamma^{x}_u:=\int_\R L_u^{y-x} m(dy).
\end{equation}
We note that $\Gamma^{x}_u \in [0,\infty]$, 
and we define its right-continuous inverse
\begin{equation}\label{axt}
A^{x}_t:=\inf\{u: \Gamma_u^x>t\}.
\end{equation}
Since $m$ is non-zero, $\Gamma^x_u\to\infty$ as $u\to\infty$, so
$A_t^x<\infty$.  
The process 
\begin{equation}
X^{x}_t:= \bxt=x + B_{A^x_t}
\end{equation}
will be called a generalized diffusion with speed measure $m$ and starting point $x$.

Although $B^x_{0}=x$,
$A^x_0$ may be strictly positive and thus in general 
$X^x_0\neq x$. Indeed, this is the case when $x$ does not belong to the support of $m$, see 
\cite{EHJT}, Lemma 3.3. We define $A^x_{0-}=0$ and $X_{0-}^x=x+B_{A^x_{0-}} = x$  and thus we 
allow the possibility that $X_{0-}^x\neq X^x_0$. 

We list some further properties of generalized diffusions. 
\begin{romenumerate}
\item\label{localmart}
$X$ is a local martingale for any initial point $x$ if and only if 
$\supp m\cap(-\infty,-b]\not= \emptyset$ and $\supp m\cap[b,\infty)\not= \emptyset$ for all $b>0$,
see Theorem~7.3 in \cite{EHJT}.
\item\label{mart}
$X$ is a martingale for any initial point $x$ if and only if the speed measure $m$ satisfies
\begin{equation}
\label{martingality}
\int_{-\infty}^{x} \vert y\vert \,m(dy)=\int_{x}^{\infty} \vert y\vert\,m(dy)=\infty
\end{equation}
for any $x\in\R$, see Theorem~7.9 in \cite{EHJT}.
\item \label{vaguelim} 
If $m_n$ is a sequence of speed measures that converges vaguely to $m$ in
the sense that
\begin{equation}\label{vague}
  \int \phi\,dm_n\to \int \phi\,dm,
\qquad \phi\in C_c^+(\R),
\end{equation}
then the corresponding 
$A^{x,n}_t\to A^x_t$ and $X^{x,n}_t\to X^x_t$ a.s.\ as $n\to\infty$, for
every $x$ and $t>0$,
see Lemma 3.12 in \cite{EHJT}.
\item\label{diffusion}
If $dm(y)=\frac{dy}{\sigma^2(y)}$ for some continuous non-vanishing function
$\sigma$, then $X^x_t$ is a weak solution of
$$dX^x_t=\sigma(X^x_t)\,dW_t,$$
$X_0=x$ (with $W$ being Brownian motion), in which case $X^x$ is a diffusion. 
Note that the speed measure $m$ measures the 
inverse of speed rather than speed.
\end{romenumerate}

\section{A Feynman--Kac type theorem for generalized diffusions}
\label{S:mainresult}

Throughout the rest of this article we assume that $m$ is a locally finite and
nonnegative Borel measure on $\R$ such that  
\begin{equation}
\label{unbounded_support}
\left\{\begin{array}{l}
\supp m\cap(-\infty,-b]\not= \emptyset\\
\supp m\cap[b,\infty)\not= \emptyset\end{array}\right.\mbox{ for all }b>0.
\end{equation}
The corresponding generalized diffusion is then a local martingale 
by \ref{localmart} above. Let 
\begin{equation}
\label{phi}
\Phi(x):=\left\{\begin{array}{rl}
2\int_{[0,x)} y\,m(dy) & x\geq 0\\
2\int_{[x,0)}  y \,m(dy) & x<0\end{array}\right.
\end{equation}
and
\begin{equation}
\label{Psi}
\Psi(x):=
\int_0^x\Phi(y)\,dy .
\end{equation}
Then 
$\Phi$ is non-decreasing with $\Phi(0)=0$, and
$\Psi$ is a non-negative convex function with left 
derivative $\Phi$ and second derivative 
$\Psi^{''}(x) = 2\vert x\vert m(dx)$ (in the distribution sense).
Note that it follows from \eqref{unbounded_support} that $\Psi$ grows at least linearly as $\vert x\vert\to\infty$.
Hence there is a constant $C_1$ such that, for all real $x$,
\begin{equation}\label{xpsi}
  |x|\le C_1(\Psi(x)+1).
\end{equation}

\begin{definition}\label{Dog}
Let $g$ be a continuous function. We define $\overline g$ to be the function
that  
agrees with $g$ on the support of $m$, and which is affine outside the support.
\end{definition}

We now state our main result.

\begin{theorem}
\label{main}
Suppose that $m$ is a locally finite nonnegative Borel measure on $\R$ such
that  \eqref{unbounded_support} holds.
Let $g$ be a continuous function such that $g(x)= o(\Psi( x))$ as $\vert x\vert\to\infty$.
Then there exists a unique continuous function $U:\R\times[0,\infty)\to \R$
  such that $U(x,t)= o(\Psi( x))$ as $\vert x\vert\to\infty$ 
locally uniformly in $t$ and such that 
\begin{equation}
\label{mpde1}
2m(x)U_t(x,t)=U_{xx}(x,t)
\qquad\text{for }t>0
\end{equation}
holds in the sense of distributions with the initial values 
\begin{equation}\label{mpde2}
  U(x,0)=\overline g(x).
\end{equation}

Moreover, the function $U$ is given by a stochastic representation 
\begin{equation}
  \label{mainu}
U(x,t)=\E g(X^x_t)=\E\overline g(X^x_t),
\end{equation} 
where $X^x_t$ is the generalized diffusion with 
speed measure $m$ and starting point $x$.
\end{theorem}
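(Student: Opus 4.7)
My plan is to prove existence via the stochastic representation and uniqueness via a martingale argument, with the distributional PDE established by approximating $m$ by smooth, non-degenerate speed measures.

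For existence, set $U(x,t) := \E \overline g(X^x_t)$. Since $X^x_t \in \supp m$ almost surely and $g = \overline g$ on $\supp m$, this coincides with $\E g(X^x_t)$. To obtain finiteness and the growth bound $U(x,t) = o(\Psi(x))$, I would use that $\Psi$ is convex with distributional second derivative $2|x|\,m(dx)$: an It\^o--Tanaka computation applied to $B^x$ followed by the time change $A^x$ yields a submartingale estimate of the form $\E \Psi(X^x_t) \le \Psi(x) + 2t$, together with integrability of $\sup_{s\le t}\Psi(X^x_s)$ via Doob's inequality. Combined with the bound $|g(y)| \le \ep \Psi(y) + C(\ep)$ valid for any $\ep > 0$ by the hypothesis $g(y) = o(\Psi(y))$, this gives integrability, and sending $\ep \downarrow 0$ delivers the growth estimate for $U$. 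Continuity on $\R \times [0,\infty)$ follows from property \ref{vaguelim} applied to translates of $m$ (to handle $x$-dependence), together with dominated convergence using the $\Psi$-bound.

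To show that $U$ satisfies $2mU_t = U_{xx}$ in the distributional sense, I would approximate $m$ vaguely by speed measures $m_n$ with smooth, strictly positive densities, so that each $X^{x,n}$ is a classical regular diffusion (property \ref{diffusion}). Classical Feynman--Kac then gives a smooth solution $U_n(x,t) := \E \overline g(X^{x,n}_t)$ of $2m_n\partial_tU_n = \partial_{xx}U_n$. Property \ref{vaguelim} yields $X^{x,n}_t \to X^x_t$ almost surely, hence $U_n \to U$ pointwise, and a uniform $o(\Psi)$ bound on the $U_n$ (stable under vague convergence of $m_n$) lets us pass to the limit in $\iint (2m_n\partial_tU_n - \partial_{xx}U_n)\phi\,dx\,dt = 0$ against test functions $\phi \in C_c^\infty(\R \times (0,\infty))$, producing the PDE for $U$.

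For uniqueness, let $V$ be any continuous solution of \eqref{mpde1}--\eqref{mpde2} satisfying $V(x,t) = o(\Psi(x))$ locally uniformly in $t$. The core idea is to apply a generalized It\^o--Tanaka expansion to $M_s := V(X^x_s, t-s)$ on $s \in [0,t]$. After the time change, the second spatial derivative of $V$ produces a local-time integral against the measure $V_{xx}$, while the time derivative produces $-V_t(X^x_s, t-s)\,d\Gamma^x_s$; by the very definition \eqref{gamma} of $\Gamma^x$ and the distributional identity $V_{xx} = 2mV_t$, these finite-variation terms cancel, leaving $M$ a local martingale. The growth hypothesis on $V$ together with $\E \sup_{s\le t}\Psi(X^x_s) < \infty$ upgrades $M$ to a true martingale, whence $V(x,t) = \E M_0 = \E M_t = \E \overline g(X^x_t) = U(x,t)$, simultaneously giving uniqueness and the stochastic representation.

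The main obstacle is making the It\^o--Tanaka step rigorous for a merely distributional solution: $V_{xx}$ is only a measure in $x$ and $V_t$ is a priori only a distribution, so some bootstrapping (for example, showing that $V_t$ can be realized as a function locally integrable against $m\otimes dt$) is needed before the chain rule along the time-changed Brownian path is justified. A closely related technical point in the existence argument is establishing the uniform $o(\Psi)$-bound on the approximating $U_n$ in a way that is stable under vague convergence of $m_n$; this likely rests on the same $\Psi$-submartingale estimate as above, but now applied uniformly in $n$ using the $\Phi$-monotonicity and the hypothesis \eqref{unbounded_support}.
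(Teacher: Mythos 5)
There are two genuine gaps, one in each half of your argument.

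\textbf{Existence.} Your plan to approximate $m$ vaguely by smooth strictly positive densities $m_n$, apply classical Feynman--Kac to get $U_n$, and pass to the limit in the weak formulation is the paper's strategy, but the limit passage is where the real work lies and your proposal does not supply it. The problematic term is $\iint U_n\varphi_t\,m_n(dx)\,dt$: here \emph{both} $U_n$ and $m_n$ vary with $n$, and pointwise convergence $U_n\to U$ together with a uniform $o(\Psi)$ (or even uniform local $L^\infty$) bound is not enough to conclude convergence of the integral against a merely vaguely convergent sequence of measures. One needs $U_n\to U$ \emph{uniformly on compacts}. The paper obtains this by proving that the $U_n$ are Lipschitz in $(x,t)$ uniformly in $n$ --- Lipschitz in $x$ via the comparison theorem for one-dimensional diffusions, and Lipschitz in $t$ via a super/subsolution argument that requires $g''$ bounded \emph{and} $m(dx)\ge\ep\,dx$. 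This is why the paper cannot do the approximation in one step: it needs a ladder of reductions (first $m\ge\ep\,dx$ with $g,g',g''$ bounded; then drop the lower bound on $m$ by adding $n^{-1}dx$; then drop smoothness of $g$ by mollification; then drop boundedness of $g$ by truncation, using uniform integrability from the $\Psi$-estimate). Your proposal collapses all of this into one unsupported limit. (A minor further point: your claimed bound $\E\Psi(X^x_t)\le\Psi(x)+2t$ is not correct in general; the It\^o--Tanaka estimate gives $\E\Psi(X^x_r)\le\Psi(x)+\int_0^r\E|X^x_s|\,ds$, and since $|y|$ is only controlled by $C_1(\Psi(y)+1)$ one needs Gronwall, yielding $(\Psi(x)+1)e^{C_1t}$.)

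\textbf{Uniqueness.} Here you take a genuinely different route (a verification/martingale argument: show $M_s=V(X^x_s,t-s)$ is a martingale) from the paper, which uses a purely analytic maximum principle. But your route has an acknowledged and unresolved hole at its core: for an arbitrary competitor $V$ that is merely continuous and solves \eqref{mpde1} distributionally, $V_{xx}$ is a priori only a distribution (not even a measure) and $V_t$ only a distribution, so there is no available It\^o--Tanaka formula to expand $V(X^x_s,t-s)$; the ``bootstrapping'' you defer to is precisely the hard part, and the paper's regularity theorem applies only to the stochastic representation, not to an arbitrary solution, so it cannot be invoked circularly. The paper sidesteps this entirely: it first proves a maximum principle on bounded rectangles for functions $f$ with $f$ continuous, $f_t$ continuous, and $2f_tm\ge f_{xx}$ as distributions (mollifying in space and deriving a contradiction from concavity at a first-touching point), extends it to $\R\times[0,T]$ using the barrier $\ep(1+\Psi(x))e^{C_1t}$ (whose second derivative is $2\ep|x|m$, which is why $\Psi$ is the right growth gauge), and then handles a general continuous solution by the device of integrating in time, $F(x,t)=\int_0^t f(x,s)\,ds$, so that $F_t=f$ \emph{is} continuous and the maximum principle applies to $\pm F$. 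Without either this device or a rigorous substitute for the It\^o expansion, your uniqueness argument does not close.
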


\begin{remark}
The distribution $mU_t$ appearing in \eqref{mpde1} is to be interpreted as
the distribution derivative $(Um)_t$. 
(We regard here $m=m(x)$ as a distribution in $\R\times(0,\infty)$, 
independent of $t$.) 
Hence the partial differential equation 
\eqref{mpde1} is equivalent to 
\begin{equation}
\label{distribution}
-2\iint U\varphi_t\,m(dx)\,dt = \iint U\varphi_{xx}\,dx\,dt
\end{equation} 
for all $\varphi\in \mathcal D=\mathcal D(\R\times(0,\infty))$, the space of
smooth functions of compact support.

We shall see in Theorem \ref{regularity} that the solution $U$ actually has 
a continuous derivative $U_t$ in $\R\times (0,\infty)$; for such
functions, $m U_t$ in 
\eqref{mpde1} can, equivalently, also be interpreted in the usual sense as
the product of the distribution $m(x)$ in $\R\times(0,\infty)$ and the
continuous function $U_t$. We may still have to interpret $U_{xx}$ in
distribution sense, but it is equivalent to interpret \eqref{mpde1} as an
equation of distributions on $\R$, for every fixed $t>0$.
\end{remark}

\begin{remark} We can think of $g(x)$ as $u(x,0-)$. Thus from time $0-$ to time $0$, $u(x,t)$
changes from $g$ to $\overline{g}$. 
\end{remark}

\section{Uniqueness of solutions}
\label{S:uniqueness}

In this section we prove uniqueness of solutions to
\eqref{mpde1}--\eqref{mpde2}. 
We begin with a maximum principle on bounded domains.

For a fixed $T>0$ and $A>0$, let $D=D_A:=[-A,A]\times[0,T]$. Denote by $D^\circ=(-A,A)\times(0,T)$ the interior of $D$,
and let $\partial D=[-A,A]\times\{0\}\cup\{-A,A\}\times[0,T]$ denote its parabolic boundary.

\begin{lemma}
\label{bounded}
Suppose that $f\in C(D)$ and $f_t\in C(D^\circ)$. If $2f_t m \geq f_{xx}$ on $D^\circ$ (as distributions) and 
$f\geq 0$ on $\partial D$, then $f\geq 0$ on $D$.
\end{lemma}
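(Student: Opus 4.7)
The plan is to argue by contradiction in the classical parabolic maximum-principle style, after a perturbation that turns the differential inequality into a strict one. Suppose $\inf_D f<0$ and set $\tilde f(x,t):=f(x,t)+\epsilon t$ for an $\epsilon>0$ small enough that $\inf_D\tilde f<0$ still holds. Then $\tilde f=f\geq0$ on the bottom of $\partial D$ and $\tilde f(\pm A,t)\geq\epsilon t>0$ on the lateral sides, so the negative infimum of $\tilde f$ is attained at a point $(x_0,t_0)$ with $x_0\in(-A,A)$ and $t_0\in(0,T]$; after replacing $T$ by $T-\eta$ for a suitable small $\eta>0$, one may further assume $t_0<T$. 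Moreover $\tilde f$ satisfies the strict distributional inequality $2\tilde f_tm-\tilde f_{xx}\geq 2\epsilon m$ on $D^\circ$.

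A useful structural point is that any non-negative distribution is a positive Radon measure, so $\nu:=2f_tm-f_{xx}$ is a non-negative Radon measure on $D^\circ$, and $f_{xx}$ is consequently a signed Radon measure. This legitimises manipulating $\tilde f_{xx}$ as a genuine measure in what follows---integrating it against bounded Borel test functions, disintegrating in $t$, and so on.

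To extract the contradiction I would localise around $(x_0,t_0)$ by testing the strict inequality against $\varphi(x,t)=\rho_r(x)\chi_r(t)$, a product of non-negative smooth bumps of width $r$ centered at $x_0$ and $t_0$, scaled to have unit $L^1$-mass. Integration by parts in $x$ transfers the $\tilde f_{xx}$-term to $\iint\tilde f\,\rho_r''\chi_r\,dx\,dt$, and as $r\to 0$ this is controlled by the modulus of continuity of $\tilde f$. The $\tilde f_t$-term converges, by continuity of $\tilde f_t$, to an evaluation at $(x_0,t_0)$, while the right-hand side $2\epsilon\iint\rho_r\chi_r\,m(dx)\,dt$ is bounded below by a positive constant determined by the mass of $m$ near $x_0$. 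A careful leading-order analysis then bounds a strictly positive quantity by a non-positive one---the contradiction.

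The main obstacle is the case in which $m$ vanishes on some neighborhood $(a,b)\ni x_0$ of the minimum: there the strict right-hand side $2\epsilon m$ is zero and the above local argument collapses. Instead, on $(a,b)\times(0,T)$ one has $\tilde f_{xx}\leq 0$, and disintegration in $t$ shows that $\tilde f(\cdot,t)$ is concave in $x$ on $(a,b)$ for every $t$. A continuous concave function with an interior global minimum on an open interval is constant there, so $\tilde f(\cdot,t_0)\equiv\tilde f(x_0,t_0)$ on $(a,b)$. The endpoints of this maximal gap in $\supp m$ either lie in $\supp m$---reducing to the previous case at that endpoint---or equal $\pm A$, in which case continuity propagates the negative minimum value to the lateral boundary, contradicting $\tilde f(\pm A,t_0)\geq\epsilon t_0>0$. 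Bookkeeping this dichotomy carefully is the technical heart of the proof.
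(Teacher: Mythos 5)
Your overall strategy (perturb, locate a first/lowest interior minimum, use continuity of $f_t$ to get a sign, exploit the distributional inequality locally) is the same as the paper's, but the closing step in your ``Case 1'' is a genuine gap. Pairing the inequality with $\varphi=\rho_r\chi_r$ gives
\begin{equation*}
2\iint \tilde f_t\,\rho_r\chi_r\,m(dx)\,dt \;\ge\; \iint \tilde f\,\rho_r''\chi_r\,dx\,dt + 2\ep\iint\rho_r\chi_r\,m(dx)\,dt ,
\end{equation*}
and neither side behaves as you claim. With $\rho_r\chi_r$ normalized in $L^1(dx\,dt)$, the mass $M_r:=\iint\rho_r\chi_r\,m(dx)\,dt$ is \emph{not} bounded below by a positive constant: it is of order $m(B_r(x_0))/r$, which can tend to $0$ (or to $\infty$ if $m$ has an atom at $x_0$). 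Worse, the term $\iint\tilde f\rho_r''\chi_r$ is only $O(\omega(r)/r)$ where $\omega$ is the modulus of continuity of $\tilde f$ (after subtracting $\tilde f(x_0,t)$, using $\int\rho_r''=0$); it is neither small compared to $\ep M_r$ nor of a determined sign, because $x_0$ being a minimum of $\tilde f(\cdot,t_0)$ controls second \emph{differences} at $t=t_0$, not the pairing of $\tilde f_{xx}$ with an arbitrary nonnegative bump over a space--time window. So no contradiction is extracted, and since your Case 2 ultimately reduces to Case 1 at an endpoint of the gap in $\supp m$, the whole proof rests on this unproved step.

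The fix is exactly the paper's device, and it removes your case split entirely. First, the correct local consequence of $f_t<0$ near $(x_0,t_0)$ is $f_{xx}\le 2f_tm\le 0$ as a distribution on a space--time neighborhood, \emph{independently of whether $m$ charges that neighborhood}; mollifying in both variables and letting the mollification parameter tend to $0$ converts this into genuine concavity of $x\mapsto f(x,t_0)$ near $x_0$, i.e.\ $f(x_0-h,t_0)+f(x_0+h,t_0)\le 2f(x_0,t_0)$. Second, a perturbation by $+\ep t$ alone cannot turn this into a contradiction (a concave function can be constant near an interior minimum, which is precisely why you are driven into the delicate $\supp m$ bookkeeping); the paper instead perturbs by $\ep(1+t+A^2-x^2)$, whose $-\ep x^2$ term makes the second difference of the perturbed function strictly negative, $\le -2\ep h^2$, while minimality of $(x_0,t_0)$ over earlier times forces it to be $\ge 0$. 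If you replace your bump-function pairing by this mollification-to-concavity step and add the $-\ep x^2$ term to your perturbation, your argument closes without any dichotomy on $m$.
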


\begin{proof}
Let $\ep>0$, and define 
\begin{equation}
\label{F}
F(x,t):=f(x,t) +\ep (1+t+A^2-x^2).
\end{equation}
Then $F\in C(D)$ and $F(x,t)\geq \ep$ on $\partial D$.
Let $E:=\{(x,t)\in D: F(x,t)\leq 0\}$. Then $E$ is compact.
Let 
\[t_0:=\min\{t\geq 0:(x,t)\in E\mbox{ for some }x\in[-A,A]\},\]
and suppose $t_0<T$. Since $E\cap\partial D=\emptyset$ we have $t_0>0$.
Take $x_0\in[-A,A]$ with $(x_0,t_0)\in E$, i.e.\ $F(x_0,t_0)\leq 0$. Then $-A<x_0<A$, so $(x_0,t_0)\in D^\circ$.

Note that $F_t=f_t+\ep\in C(D^\circ)$. If $t<t_0$, then 
$F(x_0,t)\geq 0\geq F(x_0,t_0)$, so $F_t(x_0,t_0)\leq 0$. Consequently, $f_t(x_0,t_0)\leq -\ep$.
By continuity of $f_t$, there exists a neighborhood $U\subset D^\circ$ of $(x_0,t_0)$ such that
$f_t<0$ in $U$. Thus $f_tm\leq 0$ in $\mathcal D^{\prime}(U)$, and 
$f_{xx}\leq 2f_t m\leq 0$ in $\mathcal D^{\prime}(U)$.

Let $\psi\in C^\infty_0(\R^2)$ with support in the unit ball $B$ such that $\psi\geq 0$ and $\int\psi=1$, 
and define $\psi_\eta(x)=\eta^{-2}\psi(x/\eta)$. Let $V$ be a smaller neighborhood of $(x_0,t_0)$, 
i.e.\ $\overline V\subset U$. Then, for $\eta$ so small that $V+\eta B\subset U$, say $\eta<\eta_0$, we have that
\[(\psi_\eta\mathord{*}f)_{xx}=\psi_\eta\mathord{*} f_{xx}\leq 0\]
in $\mathcal D^{\prime}(V)$. But $\psi_\eta\mathord{*}f\in C^{\infty}(V)$, so this means that 
$(\psi_\eta\mathord{*}f)_{xx}\leq 0$ as a continuous function, i.e.\ pointwise in $V$.

Choose $h$ so small that $[x_0-h,x_0+h]\times\{t_0\}\subset V$. Then, for any $\eta<\eta_0$, $\psi_\eta\mathord{*} f(x,t_0)$
is concave on $[x_0-h,x_0+h]$, and thus 
\[\psi_\eta\mathord{*} f(x_0-h,t_0) + \psi_\eta\mathord{*} f(x_0+h,t_0) \leq 2\psi_\eta\mathord{*} f(x_0,t_0).\]
Letting $\eta\to 0$ we have, since $f$ is continuous,
$\psi_\eta\mathord{*} f (x,t)\to f(x,t)$ for every $(x,t)\in V$, and thus
\[f(x_0-h,t_0) + f(x_0+h,t_0)\leq 2f(x_0,t_0).\]
Consequently, using the definition \eqref{F},
\begin{align*}
 F(x_0&-h,t_0) + F(x_0+h,t_0)-2F(x_0,t_0) \\
&=f(x_0-h,t_0) + f(x_0+h,t_0)-2f(x_0,t_0) 
\\&\hskip10em
-\ep((x_0-h)^2+(x_0+h)^2-2x_0^2)
\\ &\leq -2\ep h^2<0.
\end{align*}
However, by definition $F(x_0,t_0)\leq 0$ while $F(x,t)>0$ for $x\in [-A,A]$ and $0\leq t<t_0$, and thus 
by continuity we have $F(x,t_0)\geq 0$. Hence $F(x_0-h,t_0) + F(x_0+h,t_0)-2F(x_0,t_0)\geq 0$. This is a contradiction, 
which shows that $t_0<T$ is impossible. Consequently, either $t_0=T$ or $E=\emptyset$. In both 
cases, $F(x,t)>0$ for $(x,t)\in[-A,A]\times[0,T)$. By continuity $F\geq 0$ in $D$. Hence, by \eqref{F},
\[f(x,t)+\ep(1+t+A^2-x^2)\geq 0\]
in $D$. Letting $\ep\to 0$ finishes the proof.
\end{proof}

We next extend the maximum principle to an unbounded domain.
Let $D_\infty:=\R\times[0,T]$ and $D_\infty^\circ:=\R\times(0,T)$.

\begin{lemma}
\label{unbounded}
Suppose that $f\in C(D_\infty)$ and $f_t\in C(D_\infty^\circ)$. Also assume that 
$2f_t m \geq f_{xx}$ on $D_\infty^\circ$ (as distributions),
$f\geq 0$ on $\R\times\{ 0\}$, and that $f(x,t)=o(\Psi(x))$ as $\vert x\vert \to\infty$ uniformly in $t\in[0,T]$.
Then $f\geq 0$ on $D$.
\end{lemma}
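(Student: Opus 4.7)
The plan is to reduce to the bounded case of Lemma \ref{bounded} by perturbing $f$ upwards with a non-negative distributional supersolution that dominates $|f|$ on the lateral boundary of a large rectangle $D_A$, and then letting both the domain expand and the perturbation vanish. The key step is choosing the right barrier. Recalling $\Psi''(x) = 2|x|\,m(dx)$ in the distributional sense and the linear bound $|x|\le C_1(\Psi(x)+1)$ from \eqref{xpsi}, I would set
\[ B(x,t) := e^{C_1 t}\bigl(\Psi(x)+1\bigr). \]
A short computation shows, as distributions on $\R\times(0,T)$,
\[ 2B_t\,m - B_{xx} \;=\; 2 e^{C_1 t}\bigl(C_1(\Psi(x)+1) - |x|\bigr)\,m(dx) \;\ge\; 0, \]
so $B$ is a distributional supersolution of the same inequality; moreover $B$ is strictly positive on $D_\infty$ and satisfies $B(x,t)\ge \Psi(x)+1$ for all $t\in[0,T]$.

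Now fix $\epsilon > 0$ and set $f_\epsilon := f + \epsilon B$; this has the same regularity as $f$ and satisfies $2(f_\epsilon)_t\,m \ge (f_\epsilon)_{xx}$ as distributions on $D_\infty^\circ$. Using the hypothesis $f(x,t) = o(\Psi(x))$ uniformly in $t\in[0,T]$, together with $\Psi(x)\to\infty$ as $|x|\to\infty$, I would choose $A_0 = A_0(\epsilon)$ so that $|f(x,t)| \le (\epsilon/2)\Psi(x)$ for all $|x|\ge A_0$ and $t\in[0,T]$. Then for any $A \ge A_0$, $f_\epsilon \ge 0$ on the parabolic boundary $\partial D_A$: at $t=0$ because $f(\cdot,0)\ge 0$ and $\epsilon B > 0$, and at $x = \pm A$ because
\[ \epsilon B(\pm A, t) \;\ge\; \epsilon \Psi(\pm A) \;\ge\; |f(\pm A,t)| + \tfrac{\epsilon}{2}\Psi(\pm A). \]
Applying Lemma \ref{bounded} to $f_\epsilon$ on $D_A$ yields $f_\epsilon \ge 0$ throughout $D_A$, i.e.\ $f \ge -\epsilon B$ on $D_A$.

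For any fixed $(x_0,t_0) \in D_\infty$, choosing $A \ge \max(|x_0|+1, A_0(\epsilon))$ places $(x_0,t_0) \in D_A$, so $f(x_0,t_0) \ge -\epsilon B(x_0,t_0)$; letting $\epsilon\to 0$ gives $f(x_0,t_0)\ge 0$. The main obstacle I expect is identifying the barrier: the multiplicative factor $e^{C_1 t}$ is essential, because a purely additive time correction of the form $\Psi(x)+\alpha t$ cannot dominate the unbounded $|x|$ term appearing in $B_{xx}$ against a measure $m$ of unbounded support, whereas the exponential lets \eqref{xpsi} absorb exactly the $|x|$-term produced by $\Psi''$. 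Once $B$ is in hand, the localization-plus-limit step is routine.
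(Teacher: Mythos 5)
Your proof is correct and follows essentially the same route as the paper: the barrier $\epsilon(\Psi(x)+1)e^{C_1 t}$ is exactly the perturbation used there, the supersolution computation via $\Psi''=2|x|m$ and \eqref{xpsi} is identical, and the reduction to Lemma~\ref{bounded} on $D_A$ followed by letting $A\to\infty$ and $\epsilon\to 0$ matches the paper's argument.
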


\begin{proof}
Let $\ep>0$ and define, with $C_1$ as in \eqref{xpsi}, 
\begin{equation*}
\label{h}
h(x,t):=f(x,t) + \ep (1+\Psi(x))e^{C_1t}.
\end{equation*}
By the assumption $f(x,t)=o(\Psi(x))$, 
if $A$ is large enough, then  $h(x,t)\geq 0$ for $\vert x\vert\geq
A$ and $t\in[0,T]$. 
Fix one such $A$. Then 
$h\geq 0$ on $\partial D_A$. Moreover, using \eqref{xpsi}, 
\begin{align*}
h_{xx} &=  f_{xx} +\ep \Psi_{xx}e^{C_1t}= f_{xx}+\ep 2|x|m e^{C_1t}\\
&\leq 2f_t m +\ep 2C_1(1+\Psi(x))me^{C_1t} = 2 h_tm
\end{align*}
on $D^\circ_A$. Hence Lemma~\ref{bounded} applies to $h$ and yields $h\geq 0$ on $D_A$. Since we can choose 
$A$ arbitrarily large, $h\geq 0$ on $D_\infty$. Now, letting $\ep\to 0$ yields $f(x,t)\geq 0$ for $(x,t)\in D_\infty$.
\end{proof}

\begin{lemma}
\label{unbounded2}
Suppose that $f\in C(D_\infty)$. Also assume that 
$2f_t m = f_{xx}$ on $D_\infty^\circ$,
$f= 0$ on $\R\times\{ 0\}$, and that $f(x,t)=o(\Psi(x))$ as $\vert x\vert \to\infty$ uniformly in $t\in[0,T]$.
Then $f=0$ on $D_\infty$.
\end{lemma}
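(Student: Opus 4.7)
The natural approach is to apply Lemma~\ref{unbounded} to both $f$ and $-f$: the distributional equality $2 f_t m = f_{xx}$ supplies both required inequalities, the zero initial datum has both signs, and the $o(\Psi)$ growth is symmetric, so one would immediately conclude $f \equiv 0$. The single obstruction is that Lemma~\ref{unbounded} requires $f_t \in C(D_\infty^\circ)$, a regularity that is not assumed here. The entire task is therefore to recover enough regularity by mollifying in time, and then apply the bounded maximum principle Lemma~\ref{bounded} directly to a suitable barrier.

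\textbf{Step 1 (extension and mollification).} Extend $f$ by $0$ to $t < 0$; continuity is preserved because $f(\cdot,0) = 0$, and a standard time-cutoff argument (pair the equation with $\chi_\lambda(t)\varphi$ for a smooth indicator of $\{t \ge \lambda\}$ and send $\lambda \to 0$, using that $\int f(x,t)\varphi(x,t)\,m(dx) \to 0$ as $t \to 0^+$) promotes $2 f_t m = f_{xx}$ to a distributional identity on $\R \times (-\infty, T)$. Let $\rho_\eta \in C_c^\infty(\R)$ be a non-negative symmetric mollifier supported in $(-\eta, \eta)$ with $\int \rho_\eta = 1$, and put
\[
f^\eta(x,t) := \int f(x, t-s)\, \rho_\eta(s)\, ds.
\]
Then $f^\eta \in C$, $f^\eta_t$ is continuous by differentiation under the integral, and rolling $\rho_\eta$ onto a test function $\varphi \in \mathcal D(\R \times (-\infty, T - \eta))$ (producing $\tilde\varphi \in \mathcal D(\R \times (-\infty, T))$) transfers the identity to $2 f^\eta_t m = f^\eta_{xx}$ on $\R \times (-\infty, T-\eta)$. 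The bound $f^\eta = o(\Psi)$ uniformly for $t \in [0, T-\eta]$ is inherited from $f$, and $f^\eta(\cdot, 0) \to 0$ uniformly on compact subsets of $\R$ as $\eta \to 0$.

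\textbf{Step 2 (barrier and bounded maximum principle).} Fix $\delta > 0$. Using the uniform $o(\Psi)$ growth of $f$, choose $R$ so that $|f(x,t)| \le \delta \Psi(x)/4$ for $|x| \ge R$ and $t \in [0,T]$; the same bound then holds for $f^\eta$ on $[0, T-\eta]$. Let $\ep(\eta) := \sup_{|x| \le R,\, u \in [0,\eta]} |f(x,u)|$, so that $\ep(\eta) \to 0$ as $\eta \to 0$, and consider the barrier
\[
h(x,t) := f^\eta(x,t) + \ep(\eta) + \delta\bigl(1 + \Psi(x)\bigr) e^{C_1 t}.
\]
Exactly as in the proof of Lemma~\ref{unbounded}, the inequality $|x| \le C_1(1 + \Psi(x))$ from \eqref{xpsi} gives $2 h_t m \ge h_{xx}$ distributionally. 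Splitting at $|x| = R$ shows $h(\cdot,0) \ge \delta > 0$ pointwise, and for $A > R$ sufficiently large the $o(\Psi)$ bound forces $h(\pm A, t) > 0$ for all $t \in [0, T-\eta]$. Lemma~\ref{bounded} applied on $D_A = [-A, A] \times [0, T-\eta]$ yields $h \ge 0$ there, and since $A$ is arbitrary, $h \ge 0$ on $\R \times [0, T-\eta]$.

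\textbf{Step 3 (passage to the limit).} Sending $\eta \to 0$ gives $f(x,t) \ge -\delta(1 + \Psi(x)) e^{C_1 t}$ on $\R \times [0, T)$, and then $\delta \to 0$ gives $f \ge 0$. Repeating the entire argument with $-f$ in place of $f$ produces $f \le 0$, so $f \equiv 0$ on $D_\infty$ by continuity at $t = T$. The main technical subtlety is designing $h$ so that \emph{every} portion of the parabolic boundary of $D_A$ is non-negative, despite $f^\eta(\cdot, 0)$ converging to zero only locally uniformly in $x$: the $\delta \Psi$-term absorbs large $|x|$, while the constant $\ep(\eta)$ absorbs the residual error on $|x| \le R$, the two regions meshing precisely because the $o(\Psi)$ bound on $f$ is uniform in $t$.
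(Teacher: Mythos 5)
Your proof is correct, and it reaches the conclusion by a genuinely different regularization than the paper's. Both arguments share the same skeleton — the only obstacle to applying the maximum principle to $\pm f$ is the missing continuity of $f_t$, so one must first produce a time-regularized object satisfying the same equation — but the devices differ. The paper takes the antiderivative $F(x,t)=\int_0^t f(x,s)\,ds$, for which $F_t=f$ is automatically continuous; the price is showing that $F$ still solves the equation, which is done by observing that $G:=F_{xx}-2F_tm$ satisfies $G_t=0$, hence equals a distribution $h(x)$ in $x$ alone, and then identifying $h=0$ by testing with $\phi(x)\psi(t)$ supported near $t=0$ (using $F=O(t)$ locally and $f\to0$ as $t\to0$). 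Lemma~\ref{unbounded} applied to $\pm F$ then gives $F\equiv0$ and hence $f=F_t\equiv0$. You instead mollify in time; the price is extending the identity \eqref{distribution} across $t=0$ (your cutoff argument does this correctly, using $f(\cdot,0)=0$ and the local finiteness of $m$) and repairing the parabolic boundary data of the mollified function, which your two-term correction $\ep(\eta)+\delta(1+\Psi)e^{C_1t}$ handles cleanly: the constant absorbs the mollification error on $|x|\le R$ and the $\Psi$-term absorbs the tails, exactly because the $o(\Psi)$ bound is uniform in $t$. The paper's route is slightly shorter because the initial condition $F(\cdot,0)=0$ is exact and Lemma~\ref{unbounded} can be cited verbatim; your route is more self-contained in that the regularized function converges back to $f$ itself rather than to its primitive, and it avoids the "constant in $t$" identification of $G$. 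Both are valid; no gaps.
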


\begin{proof}
Define $F(x,t)=\int_0^t f(x,s)\,ds$. Then $F\in C(D_\infty)$ and $F_t(x,t)=f(x,t)\in C(D_\infty)$.
We have 
\[F_{xxt}=f_{xx}=2f_tm=2(fm)_t=2 (F_tm)_t.\]
Let $G:=F_{xx}-2F_tm\in\mathcal D^{\prime}(D^\circ_\infty)$. Then $G_t=0$, so $G(x,t)=h(x)$ for some
distribution $h\in\mathcal D^\prime (\R)$. 

Fix $\phi\in\mathcal D(\R)$, and let $\psi\in \mathcal D(0,\ep)$ with
$\psi\geq 0$ and $\int \psi=1$. 
Then, with $C$ depending on $\phi$ only,
\begin{align*}
\vert \langle F_{xx}(x,t), \phi(x)\psi(t)\rangle\vert 
&= \lrabs{\iint F(x,t)\phi_{xx}(x)\psi(t) \,dx\,dt}\\
&\leq C\sup_{0\leq t\leq\ep,x\in\supp \phi}\vert F(x,t)\vert\\
&\leq C \ep \sup_{0\leq t\leq\ep,x\in\supp \phi}\vert f(x,t)\vert = o(\ep)
\end{align*}
as $\ep\to 0$. Furthermore, 
\begin{align*}
\vert \langle F_{t}m(x,t), \phi(x)\psi(t)\rangle \vert 
&= \lrabs{\iint f(x,t)\phi(x)\psi(t) \,m(dx)\,dt}\\
&\leq  C \sup_{0\leq t\leq\ep,x\in\supp \phi}\vert f(x,t)\vert = o(1)
\end{align*}
as $\ep\to 0$. Hence $\vert \langle  G,\phi(x)\psi(t)\rangle \vert = o(1)$ as $\ep\to 0$. But 
\[\langle G,\phi(x)\psi(t)\rangle  = \langle h(x),\phi(x)\psi(t)\rangle =\langle h,\phi\rangle \int\psi(t)\,dt = \langle h,\phi\rangle .\]
Hence, letting $\ep\to 0$, $\langle h,\phi\rangle =0$. Since $\phi\in\mathcal D(\R)$ is arbitrary, 
$h=0$ and thus $G=0$. Consequently, $F_{xx}=2F_tm$ on $D^\circ_\infty$.
Moreover, $F(x,0)=0$ and 
\[\vert F(x,t)\vert\leq T\sup_{0\leq t\leq T} \vert f(x,t)\vert = o(\psi(x))\]
as $\vert x\vert \to \infty$. Hence Lemma~\ref{unbounded} applies to $F$ and shows that
$F\geq 0$ on $D_\infty$. Moreover, Lemma~\ref{unbounded} also applies to $-F$ and yields
$-F\geq 0$ on $D_\infty$. Hence $F=0$ on $D_\infty$, which implies that $f=0$ on $D_\infty$.
\end{proof}

\begin{proof}[Proof of uniqueness in Theorem~\ref{main}.]
Assume that $U_1$ and $U_2$ both solve \eqref{mpde1}--\eqref{mpde2}
with the same initial condition $\overline g$, and
that $U_1=o(\Psi(x))$ and $U_2=o(\Psi(x))$ as $\vert x\vert\to\infty$ locally uniformly in $t$. 
Applying Lemma~\ref{unbounded2} to the function $f:=U_1-U_2$ shows that $f=0$ on $D_\infty$, 
so $U_1=U_2$.   
\end{proof}

\section{Existence of solutions}
\label{S:existence}

In this section we prove the existence claim of Theorem~\ref{main}. Indeed, 
we show that the function $U$ given by stochastic representation in
\eqref{mainu} 
solves \eqref{mpde1}--\eqref{mpde2} 
and is $o(\Psi(x))$ as $\vert x\vert\to\infty$.

Throughout this section we assume that $g$ is continuous on $\R$ with 
$g(x)=o(\Psi(x))$ as $\vert x\vert\to\infty$. (Sometimes 
this assumption is further strengtened by assuming that $g$ and some of its derivatives are bounded.)

\begin{lemma}\label{Lpsixbound}
There exists a constant $C_1$ such that
for any x and $t\ge0$,
  \[\E\Psi(X^x_t)\leq (\Psi(x ) +1)e^{C_1t}.\]
\end{lemma}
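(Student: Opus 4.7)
The plan is to derive a pathwise identity for $\Psi(X^x_t)$ via the Tanaka--Meyer formula applied to the Brownian motion $B^x$, and then close the resulting estimate with Gronwall's lemma.

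Applying Tanaka--Meyer to the convex function $\Psi$, whose left derivative is $\Phi$ and whose second-derivative measure is $\Psi''(dy)=2|y|\,m(dy)$, one obtains
\begin{equation*}
\Psi(B^x_u)=\Psi(x)+\int_0^u\Phi(B^x_s)\,dB_s+\int_{\R}L^{y-x}_u\,|y|\,m(dy).
\end{equation*}
Since $L^{y-x}_\cdot$ grows only when $B^x_\cdot=y$, an occupation-times identity rewrites the last integral as $\int_0^u|B^x_s|\,d\Gamma^x_s$, and substituting $u=A^x_t$ together with the change of variable $\tau=\Gamma^x_s$ converts it to $\int_0^t|X^x_\tau|\,d\tau$. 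Thus
\begin{equation*}
\Psi(X^x_t)=\Psi(x)+N_t+\int_0^t|X^x_s|\,ds,
\end{equation*}
where $N_t:=\int_0^{A^x_t}\Phi(B^x_s)\,dB_s$ is a continuous local martingale in the time-changed filtration.

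Next I would take expectations after a suitable localization: stopping the underlying Brownian motion at its exit from a large interval and at a bounded deterministic time makes the stochastic integral a bounded-quadratic-variation martingale to which optional stopping applies, producing a finite stopped version of the identity. Combining with the bound \eqref{xpsi} and Fubini, one arrives at
\begin{equation*}
\phi_k(t)\le\Psi(x)+C_1\int_0^t\bigl(1+\phi_k(s)\bigr)\,ds
\end{equation*}
for a bounded approximation $\phi_k(t)$ of $\E\Psi(X^x_t)$ (obtained from the stopping). Gronwall's lemma then gives $1+\phi_k(t)\le(1+\Psi(x))e^{C_1 t}$, and passing $k\to\infty$ with Fatou's lemma (using that $\Psi\ge 0$ and that $X^x_{t\wedge\tau_k}\to X^x_t$ a.s.) yields the claimed inequality.

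The main obstacle is the careful handling of the time-change and the localization, since $\Gamma^x$ may jump (where $m$ has atoms) and $A^x$ may jump (over gaps in $\supp m$), and one must arrange the localizing sequence so that the Gronwall step closes on quantities that are a priori finite. Once the pathwise identity above is in hand, the remaining estimates are routine.
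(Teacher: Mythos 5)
Your proposal is correct and follows essentially the same route as the paper: Itô--Tanaka applied to $\Psi$ along the time-changed Brownian motion, localization by exit times, the bound \eqref{xpsi}, Gronwall, and Fatou to remove the localization. The only cosmetic difference is that the paper applies Gronwall to $\E|B^x_{A^x_t\wedge H}|$ and substitutes back, whereas you apply it directly to a localized version of $\E\Psi(X^x_t)$; the two are equivalent rearrangements of the same estimate.
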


\begin{remark}
The constant $C_1$ can be chosen as the constant appearing in \eqref{xpsi}.
\end{remark}

\begin{proof}
As in the proof of Theorem~7.9 in \cite{EHJT}, the It\^o--Tanaka formula may
be employed  
to show that, for any $r\ge 0$, 
\begin{equation}
\label{7.12}
\E\Psi(B^x_{A_r^x\wedge H})
\leq \Psi(x ) + \int_0^r \E\vert B^x_{A_t^x\wedge H}\vert\,dt
\end{equation}
for any exit time $H:=\inf \{t:B^x_t\not\in (-a,a)\}$ with $|x|\le a$.
Inserting \eqref{xpsi} yields
\[\E\vert B^x_{A_r^x\wedge H}\vert 
\le C_1\E\Psi(B^x_{A_r^x\wedge H})+C_1
\leq C_1\Psi(x)+C_1 +C_1\int_0^r \E\vert B^x_{A_t^x\wedge H}\vert\,dt,\]
so Gronwall's lemma \cite[Appendix 1]{RY} yields
\begin{equation*}
\E\vert B^x_{A_t^x\wedge H}\vert \leq (C_1\Psi(x)+C_1)e^{C_1t}.
\end{equation*}
Inserting this into \eqref{7.12} gives 
\[\E\Psi(B^x_{A_r^x\wedge H})\leq (\Psi(x ) +1)e^{C_1r}.\]
By Fatou's lemma, letting $a\to\infty$ and thus $H\to\infty$,
\[\E\Psi(X^x_r)=\E\Psi(B^x_{A_r^x})\leq (\Psi(x ) +1)e^{C_1r}.
\qedhere
\]
\end{proof}

\begin{lemma}
\label{loc}
If $K\subset\R$ is a compact set and $T>0$, then 
the set of random variables $\{g(X_t^x):(x,t)\in K\times[0,T]\}$ is
uniformly integrable. 
\end{lemma}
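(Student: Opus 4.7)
The plan is to reduce uniform integrability to the moment bound in Lemma~\ref{Lpsixbound}, using the growth condition $g(y)=o(\Psi(y))$ to control the tails.

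First, since $g$ is continuous, $g$ is bounded on any compact set. Since $g(y)=o(\Psi(y))$, for every $\ep>0$ there exists $R=R(\ep)$ such that $|g(y)|\le\ep\Psi(y)$ whenever $|y|>R$. Set $M_R:=\sup_{|y|\le R}|g(y)|<\infty$. Then for any $M>M_R$ the event $\{|g(X^x_t)|>M\}$ is contained in $\{|X^x_t|>R\}$, on which $|g(X^x_t)|\le\ep\Psi(X^x_t)$.

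Second, since $K$ is compact and $\Psi$ is continuous (being convex on $\R$), $M_K:=\sup_{x\in K}\Psi(x)<\infty$. Applying Lemma~\ref{Lpsixbound} therefore gives the uniform bound
\[
\sup_{(x,t)\in K\times[0,T]}\E\Psi(X^x_t)\le (M_K+1)e^{C_1T}=:C_{K,T}.
\]

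Combining these two ingredients, for any $M>M_R$ and any $(x,t)\in K\times[0,T]$,
\[
\E\bigl[|g(X^x_t)|\mathbf 1_{\{|g(X^x_t)|>M\}}\bigr]
\le \ep\,\E\Psi(X^x_t)\le \ep\,C_{K,T}.
\]
Since $\ep>0$ was arbitrary, this shows $\sup_{(x,t)\in K\times[0,T]}\E[|g(X^x_t)|\mathbf 1_{\{|g(X^x_t)|>M\}}]\to 0$ as $M\to\infty$, which is the de la Vall\'ee Poussin-type characterization of uniform integrability. There is no real obstacle here: the lemma is a direct combination of the $o(\Psi)$ growth of $g$ with the exponential moment bound of Lemma~\ref{Lpsixbound}, and the only mild care needed is to ensure the choice of $R$ is independent of $(x,t)$, which follows from the uniformity of $g(y)=o(\Psi(y))$ in $y$.
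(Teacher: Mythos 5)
Your proof is correct and follows the same route as the paper: a uniform bound on $\E\Psi(X^x_t)$ over $K\times[0,T]$ from Lemma~\ref{Lpsixbound}, combined with the $o(\Psi)$ growth of $g$, yields uniform integrability. The paper simply delegates the final de la Vall\'ee Poussin step to a reference, whereas you spell it out explicitly; the details you give are accurate.
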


\begin{proof}
By Lemma \ref{Lpsixbound}, $\E\Psi(X^x_t)\le C$  for
$(x,t)\in K\times[0,T]$ 
and some $C<\infty$.
Since $g(x)=o(\Psi(x))$ as $\vert x\vert\to\infty$, 
it follows that the
set of random variables $\{g(X_t^x)\}$ with $(x,t)\in K\times[0,T]$ is
uniformly integrable, see e.g.\ \cite[Theorem~5.4.3 and its proof]{Gut}.
\end{proof}

%Let $B$ be a Brownian motion $B$ with starting point $B_0=0$ and local time process $L^x_u$, and denote by $X_t^{x_0}$ the generalized 
%diffusion constructed from a given speed measure $m$ using the Brownian motion $x_0 + B_t$.
%Let
%\[\Gamma_u^{x_0} =\int_\R L^{x-x_0}_u\,m(dx) = \int_\R L^x_u \, m_{x_0}(dx),\]
%where $m_{x_0}(B)= m(x_0+B)$ for Borel sets $B$, and let 
%\[A^{x_0}_t:=\inf \{ u:\Gamma^{x_0}_u> t\}\]
%be its right-continuous inverse. Then 
%\[X^{x_0}_t:=x_0 + B_{A^{x_0}_t}\]

To prove the existence part of Theorem \ref{main} we consider the function
$U(x,t):=\E g(X^x_t)$. Note that $g(X_t^x)=\overline g(X_t^x)$ a.s. since $X_t^x\in \supp m$ (see \cite[Lemma 3.1]{EHJT}),
so $U(x,t)=\E g(X^x_t)=\E \overline g(X^x_t)$.

\begin{lemma}
\label{cont}
The function $U(x,t)=\E g(X^x_t)$ is continuous and 
$U(x,0)=\og(x)$. Furthermore, $U(x,t)=o(\Psi(x))$ as $\vert x\vert\to\infty$
locally uniformly in $t\ge0$.
\end{lemma}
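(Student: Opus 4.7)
The plan is to verify the three claims in turn, coupling all generalized diffusions on one probability space by driving them with a single Brownian motion $B$ starting at $0$, so that $X^x_t=x+B_{A^x_t}$ with $A^x$ the right-continuous inverse of the process $\Gamma^x$ built from $B$ and the translated speed measure $m(\cdot+x)$.

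For $U(x,0)=\og(x)$ I would split on whether $x\in\supp m$. If $x\in\supp m$ then $A^x_0=0$, $X^x_0=x$, and $U(x,0)=g(x)=\og(x)$. Otherwise \eqref{unbounded_support} forces $x$ to lie in a bounded gap $(a,b)$ with $a,b\in\supp m$; then $A^x_0$ is the Brownian exit time of $B^x$ from $(a,b)$, $X^x_0\in\{a,b\}$, and optional stopping yields $\E X^x_0=x$. Since $\og$ is affine on $[a,b]$ and agrees with $g$ on $\{a,b\}$, one concludes $U(x,0)=\E g(X^x_0)=\E\og(X^x_0)=\og(x)$. For the growth estimate, given $\ep>0$ pick $C_\ep$ with $|g|\le\ep\Psi+C_\ep$ (possible since $g=o(\Psi)$) and invoke Lemma~\ref{Lpsixbound} to get
\[
|U(x,t)|\le\ep(\Psi(x)+1)e^{C_1 t}+C_\ep.
\]
Dividing by $\Psi(x)\to\infty$ and letting $\ep\to0$ yields $U(x,t)=o(\Psi(x))$ uniformly for $t$ in any bounded interval.

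For joint continuity, fix $(x_0,t_0)$ and a sequence $(x_n,t_n)\to(x_0,t_0)$. Because $m(\cdot+x_n)\to m(\cdot+x_0)$ vaguely (by dominated convergence on any compact window, using local finiteness of $m$), property \ref{vaguelim} applied in the above coupling gives $A^{x_n}_s\to A^{x_0}_s$ almost surely for every fixed $s>0$, and hence simultaneously on a countable dense set of $s$. Monotonicity of $s\mapsto A^{x_n}_s$ then sandwiches $A^{x_n}_{t_0-\eta}\le A^{x_n}_{t_n}\le A^{x_n}_{t_0+\eta}$ for $n$ large; letting $n\to\infty$ and then $\eta\to 0$ gives $A^{x_n}_{t_n}\to A^{x_0}_{t_0}$ whenever $t_0$ is a continuity point of $A^{x_0}$. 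The case $t_0=0$ is treated analogously, replacing the lower sandwich by the almost sure continuity of the Brownian hitting time of $\supp m$ in the starting point. Continuity of $B$ then gives $X^{x_n}_{t_n}\to X^{x_0}_{t_0}$ a.s., and the uniform integrability supplied by Lemma~\ref{loc} allows passage to expectations, yielding $U(x_n,t_n)\to U(x_0,t_0)$.

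The main obstacle is showing that a prescribed time $t_0>0$ is almost surely a continuity point of $A^{x_0}$. Jumps of $A^{x_0}$ occur exactly at the countably many levels at which $\Gamma^{x_0}$ is flat, namely the values $\Gamma^{x_0}$ assumes upon $B^{x_0}$ entering a bounded gap of $\supp m$. One expects each such level to be a continuous (diffuse) random variable, so that for the fixed $t_0$ the probability any of them coincides with $t_0$ is zero; equivalently, the Markov process $X^{x_0}$ has no fixed time of discontinuity because its jump times are totally inaccessible. Making this precise is the one step that requires genuine probabilistic input beyond the results already stated in the paper.
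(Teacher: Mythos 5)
Your treatment of the initial value $U(x,0)=\og(x)$ (splitting on $x\in\supp m$ and using the exit distribution from a gap) and of the growth bound via $|g|\le\ep\Psi+C_\ep$ and Lemma~\ref{Lpsixbound} matches the paper's proof. The joint continuity argument, however, has a genuine gap, and you have correctly located it yourself: your sandwich $A^{x_n}_{t_0-\eta}\le A^{x_n}_{t_n}\le A^{x_n}_{t_0+\eta}$ only yields $A^{x_0}_{t_0-}\le\liminf_n A^{x_n}_{t_n}\le\limsup_n A^{x_n}_{t_n}\le A^{x_0}_{t_0}$, so you still need that the fixed level $t_0$ is almost surely not a jump level of $A^{x_0}$, i.e.\ not a level at which $\Gamma^{x_0}$ has a nondegenerate interval of constancy. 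This is true (the countably many constancy levels of $\Gamma^{x_0}$ are diffuse random variables, which one can check via the strong Markov property at the entry times into gaps of $\supp m$), but it is not among the properties \ref{localmart}--\ref{diffusion} quoted from \cite{EHJT}, and as written your proof does not establish it. The same caveat applies to your sketch for $t_0=0$, where the claimed continuity of the hitting time of $\supp m$ in the starting point also needs a (short) case analysis.

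The paper avoids this issue entirely by a rescaling trick: writing $\Gamma^{x_n}_u=\int L^y_u\,m_n(dy)$ with $m_n(S)=m(S+x_n)$, it observes that $A^{x_n}_{t_n}=\inf\{u:\int L^y_u\,\nu_n(dy)>1\}$ where $\nu_n:=t_n^{-1}m_n$. Since $t_n\to t>0$ and $m_n\to m(\,\cdot+x)$ vaguely, $\nu_n\to t^{-1}m(\,\cdot+x)$ vaguely, and property \ref{vaguelim} applied at the \emph{fixed} time $1$ gives $A^{x_n}_{t_n}\to A^x_t$ a.s.\ directly; the "no fixed time of discontinuity" issue is thereby absorbed into the already-cited Lemma~3.12 of \cite{EHJT}, and even $t_n=0$ is covered by allowing $\nu_n$ to take the values $0$ and $\infty$. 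To repair your argument you must either supply the missing probabilistic lemma (a.s.\ continuity of $t\mapsto A^{x_0}_t$ at each fixed $t_0$) or adopt this reduction of time perturbations to speed-measure perturbations.
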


\begin{proof}
Consider a sequence of points $(x_n,t_n)\in\R\times[0,\infty)$ such that
  $(x_n,t_n)\to(x,t)$ as $n\to\infty$. We may assume, for notational
  simplicity, that $x=0$.
By \eqref{gamma},
\begin{equation}
\Gamma^{x_n}_u=\int_\R L_u^{y}\, m(x_n+dy)=\int_\R L_u^{y}\, m_n(dy),
\end{equation}
where $m_n$ is the translated measure defined by $m_n(S):=m(S+x_n)$ for Borel sets
$S\subseteq\R$. 
If we further define $\nu_n=t_n^{-1}m_n$,
then, by \eqref{axt},
\begin{equation}\label{rom}
  A^{x_n}_{t_n}=\inf\Bigset{u: \int_\R L_u^{y} \,m_n(dy)> t_n}
=\inf\Bigset{u: \int_\R L_u^{y} \,\nu_n(dy)> 1}.
\end{equation}
(Note that this holds also in the case $t_n=0$, when the measure $\nu_n$ only
takes the values 0 and $\infty$.)

Since $m$ is locally finite, $m_n\to m$ vaguely, see \eqref{vague}, and thus
$\nu_n\to\nu:=t^{-1}m$ vaguely. Hence,  \eqref{rom} and its analogue for
$(x,t)$ imply by \ref{vaguelim} in
Section \ref{S:construction} that
$A^{x_n}_{t_n}\to A^x_t$ a.s.
Hence, a.s.,
\begin{equation*}
  X^{x_n}_{t_n}=x_n+B_{A^{x_n}_{t_n}}
\to x+B_{A^x_t} = B_{A^x_t} = X^x_t,
\end{equation*}
and thus 
$ g(X^{x_n}_{t_n})\to g(X^{x}_t)$.
Taking the expectations we obtain, using Lemma \ref{loc},
that
\begin{equation*}
U(x_n,t_n)=\E g(X^{x_n}_{t_n})\to \E g(X^{x}_t) = U(x,t),
\end{equation*}
which shows the continuity of $U(x,t)$.

For $t=0$ we have by \cite[Lemma 3.3]{EHJT} that if $x\in\supp m$, then
$A_0^x=0$ and thus $X^x_0=x$ a.s., so $U(x,0)=g(x)$, while if
$x\notin\supp m$, then $A_0^x$ is a.s.\ the first time $B^x_t$ hits $\supp
m$; hence, if $x\in(a,b)$ where $(a,b)$ is a component of the complement of
$\supp m$, then  $U(x,0)=\frac{x-a}{b-a}g(a)+\frac{b-x}{b-a}g(b)=\og(x)$.
Thus $U(x,0)=\og(x)$ in both cases.

For the final claim we note that for any $\ep>0$ there exists $C_\ep$ such
that 
\begin{equation*}
  |g(x)| \le \ep\Psi(x)+C_\ep,
\end{equation*}
and then by Lemma \ref{Lpsixbound}, for $0\le t\le T$,
\begin{equation*}
 \E |g(X^x_t)| \le \ep\E\Psi(X^x_t)+C_\ep
\le\ep e^{C_1T}\Psi(x)+\ep e^{C_1T}+C_\ep;
\end{equation*}
since $\ep$ is arbitrary, 
this implies that $ \E |g(X^x_t)|/\Psi(x)\to0$ uniformly for $0\le t\le T$
as $|x|\to\infty$ and thus $\Psi(x)\to\infty$.
\end{proof}

In view of Lemma~\ref{cont}, it merely remains to prove that $U$ satisfies
$2m U_t=U_{xx}$ in the sense of distributions. 
(Recall that this means \eqref{distribution}; we will use this form of the
equation below, usually without comment.) 
This is done below by a series of approximations.

\begin{lemma}
\label{pointwise}
Assume that $g$ is bounded and that $m_n$ is a sequence of speed measures
converging vaguely to $m$, see \eqref{vague},
and let $X^{x,n}$ and $X^x$ be the corresponding generalized diffusions. Then
$U_n(x,t):=\E g(X^{x,n}_t)\to \E g(X^x_t) =: U(x,t)$ as $n\to\infty$,
for any $x$ and $t>0$. 
\end{lemma}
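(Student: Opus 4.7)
The plan is to reduce the statement to a direct application of property \ref{vaguelim} from Section \ref{S:construction}, combined with the bounded convergence theorem. The hypotheses have been arranged exactly so that this becomes essentially immediate.

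First I would invoke property \ref{vaguelim}: since $m_n\to m$ vaguely, for the fixed $x$ and $t>0$ we have $X^{x,n}_t\to X^x_t$ almost surely as $n\to\infty$. Because $g$ is continuous on $\R$, it then follows by composition that $g(X^{x,n}_t)\to g(X^x_t)$ almost surely.

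Second, since $g$ is assumed bounded, the random variables $g(X^{x,n}_t)$ are uniformly bounded by $\sup|g|$, and hence uniformly integrable. The bounded convergence theorem therefore gives
\begin{equation*}
U_n(x,t)=\E g(X^{x,n}_t)\to \E g(X^{x}_t)=U(x,t),
\end{equation*}
which is the desired conclusion.

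There is no real obstacle in this argument, since the convergence of the processes is supplied by the already-cited Lemma 3.12 of \cite{EHJT}. The only point worth noting is that boundedness of $g$ is essential for the passage to the limit under the expectation; without it, one would need an additional uniform integrability argument of the type given in Lemma \ref{loc}, which depends on an $x$-uniform bound on $\E\Psi(X^{x,n}_t)$ that is not a priori available from vague convergence of $m_n$ alone.
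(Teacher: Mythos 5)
Your proof is correct and follows exactly the paper's argument: almost sure convergence $X^{x,n}_t\to X^x_t$ from property (iii) (vague convergence), continuity of $g$, and the bounded convergence theorem. No differences worth noting.
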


\begin{proof}
By \ref{vaguelim} in Section \ref{S:construction}, $X^{x,n}_t \to X^x_t$ almost
surely as $n\to\infty$. 
The result then follows by the continuity of $g$ and bounded convergence.
\end{proof}

\begin{lemma}
\label{lowerbound}
Assume that $m(dx)\geq \ep \,dx$ for some $\ep>0$, and that $g$, $g^\prime$
and $g^{\prime\prime}$ are bounded. 
Then $U(x,t)$ satisfies \eqref{mpde1}.
\end{lemma}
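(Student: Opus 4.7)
The plan is to reduce the general case to the classical diffusion setting by approximation. Concretely, I would approximate $m$ by speed measures $m_n$ that are absolutely continuous with smooth, strictly positive densities, verify the equation $2m_n U_{n,t}=U_{n,xx}$ classically for each $n$ via standard parabolic/Feynman--Kac theory, and then pass to the distributional limit using Lemma~\ref{pointwise} together with the vague convergence $m_n\to m$.

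For the approximation, I would split $m=\epsilon\,dx+m'$ where $m'\ge 0$ is locally finite, and mollify the singular part: setting $m_n:=\epsilon\,dx+\psi_{1/n}\mathord{*}m'$ for a standard mollifier $\psi_{1/n}$, I obtain a smooth density $\rho_n\ge\epsilon$ for $m_n$, with $m_n\to m$ vaguely, and with $m_n$ satisfying \eqref{unbounded_support} trivially. By \ref{diffusion} in Section~\ref{S:construction}, the associated process $X^{x,n}$ is a classical diffusion $dX^{x,n}=\sigma_n(X^{x,n})\,dW$ where $\sigma_n=\rho_n^{-1/2}$ is smooth and bounded by $\epsilon^{-1/2}$. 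Since $g$, $g'$, $g''$ are bounded and $\sigma_n$ is uniformly non-degenerate and smooth, standard theory gives $U_n(x,t):=\E g(X^{x,n}_t)\in C^{1,2}$ satisfying pointwise
\begin{equation*}
U_{n,t}=\tfrac{1}{2}\sigma_n^2 U_{n,xx},\qquad \text{i.e.,}\qquad 2\rho_n U_{n,t}=U_{n,xx},
\end{equation*}
and hence, for every $\varphi\in\mathcal D(\R\times(0,\infty))$,
\begin{equation*}
-2\iint U_n\varphi_t\,dm_n\otimes dt=\iint U_n\varphi_{xx}\,dx\otimes dt.
\end{equation*}

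I would then let $n\to\infty$. Lemma~\ref{pointwise} gives $U_n\to U$ pointwise, with $|U_n|,|U|\le\|g\|_\infty$, so dominated convergence (on the compact support of $\varphi_{xx}$) handles the right-hand side immediately. For the left-hand side I would split
\begin{equation*}
\iint U_n\varphi_t\,dm_n\otimes dt=\iint U\varphi_t\,dm_n\otimes dt+\iint(U_n-U)\varphi_t\,dm_n\otimes dt.
\end{equation*}
For the first piece, for each fixed $t$ the map $x\mapsto U(x,t)\varphi_t(x,t)$ is continuous with compact support, so vague convergence $m_n\to m$ gives convergence of the inner integral, and dominated convergence in $t$ (using a uniform bound on $m_n(K)$ over any compact $K$, which follows from $m_n(K)\le\epsilon|K|+m'(K+B_1)$) gives convergence to $\iint U\varphi_t\,dm\otimes dt$. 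The remainder term $\iint(U_n-U)\varphi_t\,dm_n\otimes dt$ tends to zero by the same uniform bound on $m_n$ on $\supp\varphi$ combined with the pointwise convergence $U_n\to U$ and the uniform $L^\infty$ bound $|U_n-U|\le 2\|g\|_\infty$.

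The main obstacle is the left-hand limit, where pointwise convergence of $U_n\to U$ and vague convergence $m_n\to m$ must be combined. Everything hinges on a uniform local bound $\sup_n m_n(K)<\infty$ for compact $K$, which lets dominated convergence run both in $x$ (for the remainder term) and in $t$ (for the main piece). Once this is in place, the passage to the limit is routine and yields $2mU_t=U_{xx}$ in distribution.
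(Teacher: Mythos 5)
Your overall strategy (mollify $m$, apply classical Feynman--Kac to get the weak formulation for each $m_n$, then pass to the limit using Lemma~\ref{pointwise} and vague convergence) is the same as the paper's, and your treatment of the term $\iint U\varphi_t\,m_n(dx)\,dt$ and of the right-hand side is fine. However, there is a genuine gap in your handling of the remainder term $\iint (U_n-U)\varphi_t\,m_n(dx)\,dt$. You claim it tends to zero ``by the uniform bound on $m_n(K)$ combined with the pointwise convergence $U_n\to U$ and the uniform $L^\infty$ bound.'' This does not follow: dominated convergence requires a \emph{fixed} measure, and here the measures $m_n$ change with $n$. Pointwise convergence $U_n\to U$ together with $\sup_n m_n(K)<\infty$ is not enough; the measures $m_n$ can concentrate mass on shrinking sets where $U_n-U$ is not yet small (this actually happens in your construction whenever $m$ has an atom, since $\psi_{1/n}*m'$ then piles a fixed amount of mass into a neighborhood of radius $1/n$). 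A concrete counterexample to the abstract claim: $f_n$ a bump of height $1$ supported on $[x_0-1/n,x_0+1/n]$ with $f_n\to 0$ pointwise a.e., and $m_n$ with density $n$ on that interval, so $m_n(K)$ stays bounded yet $\int f_n\,dm_n\not\to 0$.

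What is needed, and what the paper spends most of its proof establishing, is that $U_n\to U$ \emph{uniformly on compact subsets} of $\R\times(0,\infty)$. This is obtained by proving equicontinuity of the $U_n$: a Lipschitz bound in $x$ uniform in $n$ (via the comparison theorem for one-dimensional SDEs, Theorem~IX.3.7 of \cite{RY}, applied to strong solutions, using that $g$ is Lipschitz), and a Lipschitz bound in $t$ uniform in $n$ (via the super/subsolution estimate $|U_n(x,t)-g(x)|\le \frac{D}{2\ep}t$ proved with Lemma~\ref{unbounded}, followed by the Markov property). Note that this is precisely where the hypotheses $m(dx)\ge\ep\,dx$ and the boundedness of $g'$ and $g''$ enter; your proposal never really uses them, which is a sign that the equicontinuity step is missing. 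Once locally uniform convergence is in hand, your estimate $\bigl|\iint(U_n-U)\varphi_t\,m_n(dx)\,dt\bigr|\le \sup_{\supp\varphi}|U_n-U|\cdot C\cdot\sup_n m_n(\supp\varphi)\to 0$ goes through and the rest of your argument is correct.
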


\begin{proof}
First note that if $m$ has a density which is regular enough (for the sake of simplicity, say $C^1$ with a bounded derivative), and
bounded away from 0, then $X$ is the weak solution of a stochastic differential equation, 
and by the standard Feynman--Kac theorem (see for example 
\cite[Theorem 6.5.3]{F}),  
$U$ is the unique bounded classical solution of the initial value problem
\eqref{mpde1}--\eqref{mpde2}. 
In particular, see \eqref{distribution},
\begin{equation}
\label{approximation}
-2\iint U\varphi_t\,m(dx)\,dt = \iint U\varphi_{xx}\,dx\,dt
\end{equation}
for all $\varphi\in \mathcal D$.

Now let $m$ be as specified in the lemma, i.e.\ $m(dx)\geq \ep \,dx$ for some $\ep>0$, and let 
$m_n$ be a sequence of measures with regular densities such that $m_n(dx)\geq  \ep \,dx$ for all $n$
and such that $m_n$ converges to $m$ vaguely. (Such a sequence can be 
constructed as convolutions $m_n:=\psi_n*m$ with a suitable sequence of
regularising kernels  $\psi_n\in\mathcal D(\R)$ in the usual way.)
Denote by $X^{x,n}_t$ the corresponding generalized diffusion, and let $U_n(x,t)=\E g(X^{x,n}_t)$.
Since $m_n$ has a regular density, say $dm_n(y)=\frac{dy}{\sigma^2_n(y)}$ 
where $\sigma_n$ is $C^1$ and bounded, 
$X^{x,n}_t$ is a weak solution of the stochastic differential equation
\begin{equation}
\label{sde}
dY_t=\sigma_n(Y_t)\,dW_t
\end{equation}
with $Y_0=x$ (see \ref{diffusion} in Section \ref{S:construction}). Now, let $Y^{x,n}_t$ denote the {\em strong} solution of \eqref{sde} for some given Brownian motion $W$
(a unique strong solution exists since $\sigma$ is $C^1$). Then, by weak uniqueness, 
$Y^{x,n}_t$ and $X^{x,n}_t$ coincide in law, so $U_n(x,t)=\E g(X^{n,x}_t)=\E g(Y^{n,x}_t)$.
Furthermore, since $\sigma_n$ is bounded, $Y^{x,n}_t$
is a martingale, and by a
comparison result for one-dimensional diffusions (see 
\cite[Theorem~IX.3.7]{RY}) we have  
$Y^{x,n}_t\leq Y^{y,n}_t$ if $x<y$. Consequently, if $x<y$, then
\begin{align*}
\vert U_n(y,t)-U_n(x,t)\vert 
&\leq \E\vert g(Y^{n,y}_t)-g(Y^{n,x}_t)\vert 
\leq C \E\vert Y^{n,y}_t-Y^{n,x}_t\vert\\
&= C\E ( Y^{n,y}_t-Y^{n,x}_t)=C(y-x),
\end{align*}
where $C$ is a Lipschitz constant of $g$. Consequently, $U_n$ is Lipschitz continuous in $x$ uniformly in $n$.

Let $D$ be a global bound for $\vert g^{\prime\prime}\vert$.
We claim that 
\begin{equation}
\label{claim}
\vert U_n(x,t)-g(x)\vert \leq \frac{D}{2\ep}t.
\end{equation}
To see this, consider the function
\[f(x,t)=U_n(x,t)-g(x)+\frac{D}{2\ep}t.\]
Then $f$ is a supersolution, i.e.\ it satisfies 
\[\left\{\begin{array}{ll}
2m_nf_t =2m_n(U_n)_t+(D/\ep)m_n\geq (U_n)_{xx}+D\geq  f_{xx}, & t>0,\\
f(x,0)=U_n(x,0)-g(x)= 0,\end{array}\right.\]
so Lemma~\ref{unbounded} yields $f\geq 0$. Consequently, $U_n(x,t)\geq g(x)-\frac{D}{2\ep}t$.
Similarly, the function $U_n(x,t)-g(x)-\frac{D}{2\ep}t$ is a subsolution, so
$U_n(x,t)\leq g(x)+ \frac{D}{2\ep}t$, which finishes the proof of \eqref{claim}.

Next, using the Markov property and \eqref{claim} we find that 
\begin{align*}
\vert U_n(x,t+h)-U_n(x,t)\vert 
&=\vert \E\left[ g(X^{x,n}_{t+h})-g(X^{x,n}_t)\right]\vert\\
&=\vert \E\left[ U_n(X^{x,n}_h,t) -g(X^{x,n}_t)\right]\vert \leq \frac{D}{2\ep}h
\end{align*}
for $h>0$. It follows from this, together with the uniform Lipschitz continuity in $x$ proven above, that 
$(x,t)\mapsto U_n(x,t)$ is Lipschitz continuous uniformly in $n$.
Consequently, the convergence $U_n(x,t)\to U(x,t)$ guaranteed by
Lemma~\ref{pointwise} is uniform on any compact subset of 
$\R\times(0,\infty)$. 

As noted in \eqref{approximation},
\[0 = 2\iint U_n\varphi_t\,m_n(dx)dt + \iint U_n\varphi_{xx}\,dx\,dt\]
for $\varphi\in\mathcal D$.
By bounded convergence, 
\[ \iint U_n\varphi_{xx}\,dx\,dt\to   \iint U \varphi_{xx}\,dx\,dt\]
as $n\to\infty$.
Moreover, 
\begin{align*}
 \iint U_n\varphi_t\,m_n(dx)dt
 &= \iint (U_n-U)\varphi_t\,m_n(dx)dt + 
 \iint U\varphi_t\, m_n(dx)dt \\
&\to   \iint U\varphi_t\,m(dx)dt
\end{align*}
as $n\to\infty$ since $U_n\to U$ uniformly on $\supp\varphi$ and $m_n\to m$
vaguely. Consequently,  
\[0 = 2\iint U \varphi_t\,m (dx)dt + \iint U\varphi_{xx}\,dx\,dt,\]
for any $\varphi\in\mathcal D$, 
so $U$ is a solution of \eqref{distribution} and thus \eqref{mpde1}. 
\end{proof}

\begin{lemma}
\label{almost}
Assume that $g$, $g^\prime$ and $g^{\prime\prime}$ are bounded.
Then $U(x,t)$ satisfies \eqref{mpde1}.
\end{lemma}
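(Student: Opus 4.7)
The plan is to remove the lower bound hypothesis $m(dx) \ge \ep\,dx$ of Lemma~\ref{lowerbound} by a perturbation argument: I will thicken $m$ by a vanishing multiple of Lebesgue measure, apply Lemma~\ref{lowerbound} to the perturbed measure, and then pass to the limit in the distributional equation satisfied by the corresponding $U_\delta$.

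First, for $\delta > 0$ set $m_\delta := m + \delta \lambda$, where $\lambda$ denotes Lebesgue measure. Then $m_\delta$ is locally finite, $m_\delta(dx) \ge \delta\,dx$, and \eqref{unbounded_support} trivially holds for $m_\delta$ since $\supp m_\delta = \R$. Let $X^{x,\delta}$ be the generalized diffusion with speed measure $m_\delta$ and set $U_\delta(x,t) := \E g(X^{x,\delta}_t)$. Since the hypothesis on $g, g', g''$ is inherited, Lemma~\ref{lowerbound} applies and yields
\[
-2\iint U_\delta \varphi_t\, m_\delta(dx)\,dt = \iint U_\delta \varphi_{xx}\,dx\,dt
\]
for every $\varphi \in \mathcal D$. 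Moreover $m_\delta \to m$ vaguely as $\delta \to 0$, since for $\phi \in C_c^+(\R)$ we have $\int \phi\,dm_\delta = \int \phi\,dm + \delta \int \phi\,dx \to \int \phi\,dm$. Hence by Lemma~\ref{pointwise}, $U_\delta(x,t) \to U(x,t)$ pointwise for $t > 0$, and $|U_\delta| \le \|g\|_\infty$ uniformly in $\delta$.

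To conclude, I would pass to the limit $\delta \to 0$ in the displayed identity. The right-hand side converges to $\iint U \varphi_{xx}\,dx\,dt$ by bounded convergence, since $\varphi_{xx}$ is compactly supported and $U_\delta$ is uniformly bounded. Splitting $m_\delta = m + \delta\lambda$, the left-hand side becomes
\[
-2\iint U_\delta \varphi_t\, m(dx)\,dt \; - \; 2\delta \iint U_\delta \varphi_t\,dx\,dt .
\]
The second integral is bounded independently of $\delta$ by $\|g\|_\infty \iint |\varphi_t|\,dx\,dt$, so that term is $O(\delta)$ and vanishes. For the first integral, local finiteness of $m$ together with the compact support of $\varphi_t$ makes $m \otimes dt$ finite on $\supp \varphi$, and $|U_\delta \varphi_t| \le \|g\|_\infty |\varphi_t|$ serves as a dominating function; dominated convergence therefore gives the limit $\iint U \varphi_t\, m(dx)\,dt$. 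Assembling the pieces produces \eqref{distribution}, which is exactly \eqref{mpde1}.

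The only genuine obstacle is bookkeeping: I must verify that the perturbed measure $m_\delta$ meets every hypothesis invoked in Lemma~\ref{lowerbound} and that each limit operation is controlled uniformly in $\delta$. Because $g$ is bounded throughout, no growth hypothesis at infinity enters the argument, and everything is ultimately driven by vague convergence of $m_\delta$ to $m$ combined with bounded/dominated convergence against compactly supported test functions.
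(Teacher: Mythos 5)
Your proposal is correct and follows essentially the same route as the paper: the paper sets $m_n(dx)=m(dx)+n^{-1}\,dx$, invokes Lemma~\ref{lowerbound} for each $m_n$, uses Lemma~\ref{pointwise} for pointwise convergence $U_n\to U$, and passes to the limit in the distributional identity by splitting off the $n^{-1}\iint U_n\varphi_t\,dx\,dt$ term and applying bounded convergence, exactly as you do with $\delta$ in place of $1/n$.
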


\begin{proof}
For a given speed measure $m$, let $m_n(dx)= m(dx) + n^{-1}\,dx$ and let $U$ and $U_n$ be the 
corresponding stochastic representations. By Lemma~\ref{pointwise}, $U_n\to U$ pointwise 
on $\R\times(0,\infty)$ as $n\to\infty$.
 By Lemma~\ref{lowerbound}, 
\[0=2 \iint U_n \varphi_t\,m_n (dx)dt + \iint U_n\varphi_{xx}\,dx\,dt\]
for any $\varphi\in \mathcal D$. Here
\begin{align*}
\iint U_n \varphi_t\,m_n (dx)dt &=
\iint U_n \varphi_t\,m (dx)dt + \frac{1}{n}\iint U_n \varphi_t\, dx\,dt\\
&\to \iint U \varphi_t\,m (dx)dt
\end{align*}
as $n\to\infty$ by bounded convergence and the fact that the functions $U_n$
are uniformly bounded (by $\sup|g|$).
Similarly, 
\[ \iint U_n\varphi_{xx}\,dx\,dt\to  \iint U\varphi_{xx}\,dx\,dt\]
as $n\to\infty$. It follows that
\[0= 2\iint U \varphi_t\,m (dx)dt + \iint U\varphi_{xx}\,dx\,dt\]
for any $\varphi\in \mathcal D$, which finishes the proof.
\end{proof}

\begin{lemma}
\label{Lbdd}
Assume that $g$ is bounded.
Then $U(x,t)$ satisfies \eqref{mpde1}.
\end{lemma}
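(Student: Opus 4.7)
The plan is to remove the smoothness assumption on $g$ by approximating a bounded continuous $g$ with smooth mollifications $g_n$ to which Lemma~\ref{almost} directly applies, and then passing to the limit in the distributional identity. The key observation is that \eqref{distribution} only requires uniform control on the \emph{sup-norm} of the approximating sequence $U_n$, so we do not need uniform bounds on the derivatives of $g_n$; the fact that mollification contracts the sup-norm is all that matters.

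Concretely, I would fix a nonnegative $\psi\in\mathcal D(\R)$ with $\int\psi=1$, set $\psi_n(x):=n\psi(nx)$, and define $g_n:=g*\psi_n$. Then $g_n\in C^\infty(\R)$, $|g_n|\le\sup|g|$, and both $g_n'$ and $g_n''$ are bounded (with $n$-dependent bounds), so Lemma~\ref{almost} applies to each $g_n$. Since $g$ is continuous, $g_n\to g$ pointwise (in fact locally uniformly). Letting $U_n(x,t):=\E g_n(X^x_t)$, bounded convergence gives $U_n(x,t)\to U(x,t)$ for every $(x,t)$, while $|U_n|\le\sup|g|$ uniformly in $n$. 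Lemma~\ref{almost} then yields
\[
0=2\iint U_n\varphi_t\,m(dx)\,dt+\iint U_n\varphi_{xx}\,dx\,dt
\]
for every $\varphi\in\mathcal D$.

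To conclude I would let $n\to\infty$ in this identity. Since $\varphi$ has compact support $K$ and $m$ is locally finite, the measure $|\varphi_t|\,m(dx)\,dt$ restricted to $K$ is finite, and $\varphi_{xx}\in L^1(dx\,dt)$. Combined with the pointwise convergence $U_n\to U$ and the uniform bound $|U_n|\le\sup|g|$, bounded convergence yields convergence of both integrals to the corresponding ones with $U_n$ replaced by $U$. This gives \eqref{distribution} for $U$, which is the required form of \eqref{mpde1}. There is no real obstacle here: the only thing to verify is that convolution with a probability density produces a uniformly bounded sequence converging pointwise to $g$, after which the limit passage is essentially a repetition of the argument used in the proofs of Lemmas~\ref{pointwise} and \ref{almost}.
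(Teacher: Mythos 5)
Your proposal is correct and follows essentially the same route as the paper's own proof: mollify $g$ to obtain smooth $g_n$ satisfying the hypotheses of Lemma~\ref{almost}, note that $U_n(x,t):=\E g_n(X^x_t)\to U(x,t)$ pointwise with the uniform bound $|U_n|\le\sup|g|$, and pass to the limit in \eqref{distribution} by bounded convergence. The only difference is that you spell out the mollification and the integrability of the test-function measures in more detail than the paper does.
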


\begin{proof}
Let  $g_n:=\psi_n*g$ where $\psi_n$ is a  sequence of
regularising kernels in $\mathcal D(\R)$. Then each $g_n$ satisfies the
conditions of Lemma~\ref{almost}, and thus the corresponding
$U_n(x,t):=\E g_n(X^x_t)$  satisfies \eqref{mpde1}, i.e.
\begin{equation}
\label{approxg0}
0= 2\iint U_n \varphi_t\,m (dx)dt + \iint U_n\varphi_{xx}\,dx\,dt.
\end{equation}
Moreover, $g_n(x)\to g(x)$  for every $x$, and thus by bounded convergence 
$U_n(x,t)\to U(x,t)$ for every $x$ and $t$. Hence, bounded 
convergence applied to  
\eqref{approxg0} shows that $U$ satisfies \eqref{mpde1}. 
\end{proof}

\begin{proof}[Completion of the proof of Theorem~\ref{main}.] 
Let $g=o(\Psi(x))$ as $\vert x\vert\to\infty$, and let $g_n:=(g\wedge n)\vee
(-n)$ be the function $g$ truncated at $n$ and $-n$. 
Denote by $U_n(x,t)=\E g_n(X^x_t)$ the corresponding stochastic
representations. Then $U_n\to U$ pointwise as $n\to\infty$ 
by dominated convergence.
Moreover, by Lemma~\ref{Lbdd} we have
\begin{equation}
\label{approxg}
0= 2\iint U_n \varphi_t\,m (dx)dt + \iint U_n\varphi_{xx}\,dx\,dt
\end{equation}
for any $\varphi\in \mathcal D$. 
Since $|U_n(x,t)|\leq \E \vert g(X_t^x)\vert$, Lemma~\ref{loc} implies that
the functions $U_n$ are locally bounded uniformly in $n$. Thus bounded
convergence applied to  
\eqref{approxg} shows that $U$ satisfies \eqref{mpde1}. 
\end{proof}

\section{Properties of the solution}
\label{S:properties}

In this section we study monotonicity, Lipschitz continuity and convexity of the function $U$.
Let $m$ be a given speed measure and $g$ be a given continuous function with $g(x)=o(\Psi(x))$ as $\vert x\vert \to \infty$.

\begin{theorem}[Monotonicity]
\label{monotonicity}
If $g$ is non-decreasing, then also $U(x,t)$ is non-decreasing in $x$ for any fixed $t\geq 0$.
\end{theorem}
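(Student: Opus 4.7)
The plan is to piggyback on the four-step approximation scheme used to prove existence in Section~\ref{S:existence}, establishing monotonicity first in the smoothest possible setting and then letting it propagate through each approximation, exploiting that the relevant convergences of $U_n$ are pointwise and hence preserve weak inequalities.

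\emph{Base case.} Suppose $m(dy)=\sigma(y)^{-2}dy$ with $\sigma$ of class $C^1$ and bounded, with $\sigma$ also bounded below by a positive constant, and that $g$ is smooth with bounded derivatives and non-decreasing. By item~\ref{diffusion} of Section~\ref{S:construction} and the weak-uniqueness argument in the proof of Lemma~\ref{lowerbound}, $X^x_t$ has the same distribution as the strong solution $Y^x_t$ of $dY_t=\sigma(Y_t)\,dW_t$ driven by a single Brownian motion $W$. The classical comparison theorem \cite[Theorem~IX.3.7]{RY} then yields $Y^x_t\le Y^y_t$ a.s.\ whenever $x\le y$, so that
\[
U(x,t)=\E g(Y^x_t)\le \E g(Y^y_t)=U(y,t).
\]

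\emph{Propagation through the approximations.} I will then run the same four reductions used in Section~\ref{S:existence}, checking at each step that monotonicity in $x$ is preserved in the limit. (i) For $g$ smooth with bounded derivatives and $m(dx)\ge\ep\,dx$, regularise $m$ via $m_n:=\psi_n*m$ as in Lemma~\ref{lowerbound}; each $m_n$ falls into the base case, and Lemma~\ref{pointwise} gives $U_n(x,t)\to U(x,t)$ pointwise, transferring monotonicity. (ii) For general $m$ but still smooth bounded-derivative $g$, set $m_n:=m+n^{-1}dx$ as in Lemma~\ref{almost}, which lies in step~(i); Lemma~\ref{pointwise} again yields pointwise convergence. (iii) For bounded continuous $g$, mollify $g_n:=\psi_n*g$ as in Lemma~\ref{Lbdd}; since $\psi_n\ge0$, convolution preserves non-decreasingness, so each $g_n$ is still non-decreasing, and bounded convergence transfers monotonicity to the limit. (iv) For $g=o(\Psi)$, truncate $g_n:=(g\wedge n)\vee(-n)$ as in the completion of the proof of Theorem~\ref{main}; truncation is the composition of non-decreasing maps, hence preserves monotonicity, and dominated convergence (using Lemma~\ref{loc}) does the rest.

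\emph{Main obstacle.} There is essentially no obstacle beyond verifying that each approximation respects monotonicity, which is immediate: convolution against a non-negative kernel and the operations $\cdot\wedge n$, $\cdot\vee(-n)$ all preserve non-decreasingness of a function, and weak inequalities survive pointwise limits. The only point that requires a brief comment is that the base case rests squarely on the one-dimensional SDE comparison theorem, which fails for genuinely measure-valued coefficients; this is precisely why the approximation detour is necessary rather than attempting a direct coupling or distributional maximum-principle argument on $V(x,t):=U(x+h,t)-U(x,t)$ (the latter fails because the equation $2mU_t=U_{xx}$ is not translation invariant in $x$).
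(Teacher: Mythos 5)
Your proposal is correct and follows essentially the same route as the paper: regularize the speed measure to one with a smooth strictly positive density, obtain monotonicity there from the one-dimensional comparison theorem \cite[Theorem~IX.3.7]{RY}, and transfer it to the general case via vague convergence $m_n\to m$, the a.s.\ convergence $X^{n,x}_t\to X^x_t$ from \ref{vaguelim}, bounded convergence, and a final truncation $g_M:=(g\wedge M)\vee(-M)$ for unbounded $g$. The only (harmless) difference is that you also mollify $g$ and split the regularization of $m$ into two stages, which is unnecessary here: the pathwise comparison $Y^{x,n}_t\le Y^{y,n}_t$ already gives $\E g(Y^{x,n}_t)\le \E g(Y^{y,n}_t)$ for any non-decreasing continuous $g$, so the paper regularizes only $m$ (in one step, via $\psi_n*(m+\tfrac1n\lambda)$) before passing to the limit.
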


\begin{proof}
First let
$m_n$ be a sequence of measures with positive and regular densities such that $m_n$ converges to $m$ vaguely
(such a sequence can be 
constructed as convolutions $m_n:=\psi_n*(m+\frac{1}{n}\lambda)$ with a suitable sequence of
regularising kernels  $\psi_n\in\mathcal D(\R)$ in the usual way).
Let $U_n(x,t)=\E g(X^{n,x}_t)$ where $X^n$ is the generalized diffusion with speed measure $m_n$.
By property \ref{vaguelim} in Section~\ref{S:construction},
$X^{n,x}_t\to X^x_t$ almost surely as $n\to\infty$. 
As in the proof of Lemma~\ref{lowerbound}, 
the comparison result for one-dimensional diffusions 
yields that $U_n(x,t)$ is increasing in $x$. 

Assume first that $g$ is bounded. 
Then \eqref{mainu} implies, by bounded
convergence, that $U_n(x,t)\to U(x,t)$ as $n\to\infty$.
Consequently, $U(x,t)$ is increasing in $x$ and
the theorem is proved in the case of bounded $g$.

In the general case, let $g_M:=(g\wedge M)\vee(-M)$. Then the theorem follows by letting $M\to\infty$ and using
dominated convergence.
\end{proof}

\begin{theorem}[Lipschitz continuity]
Suppose that the speed measure $m$ satisfies \eqref{martingality} so that the corresponding generalized diffusion 
$X$ is a martingale. If $g$ is Lipschitz continuous with some constant $C$, i.e. $\vert g(x)-g(y)\vert\leq C\vert x-y\vert$
for all $x,y\in\R$, then so is $x\mapsto U(x,t)$ for every $t\geq 0$.
\end{theorem}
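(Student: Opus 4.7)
The plan is to mirror the argument from Theorem~\ref{monotonicity}, combining vague approximation of $m$ with the comparison principle for smooth one-dimensional SDEs that was already exploited in Lemma~\ref{lowerbound}.

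Suppose first that $g$ is bounded in addition to being Lipschitz. I would pick a sequence of speed measures $m_n$ whose densities are smooth and bounded below by a positive constant---for instance $m_n := \psi_n \mathord{*} (m + n^{-1}\lambda)$ with a regularising kernel $\psi_n \in \mathcal D(\R)$ and Lebesgue measure $\lambda$---chosen so that $m_n \to m$ vaguely. Writing $dm_n(y) = dy/\sigma_n^2(y)$, the coefficient $\sigma_n$ is smooth and bounded, so the strong solution $Y^{n,x}$ of $dY = \sigma_n(Y)\,dW$, $Y_0 = x$, exists, coincides in law with $X^{n,x}$, and is a true martingale. The one-dimensional comparison theorem \cite[Theorem~IX.3.7]{RY} gives $Y^{n,x}_t \leq Y^{n,y}_t$ almost surely for $x < y$, so the Lipschitz hypothesis yields
\begin{equation*}
\bigabs{U_n(y,t) - U_n(x,t)} \le C\,\E\bigabs{Y^{n,y}_t - Y^{n,x}_t} = C\,\E\bigl(Y^{n,y}_t - Y^{n,x}_t\bigr) = C(y-x),
\end{equation*}
where the last equality uses the martingale property of both solutions.

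Lemma~\ref{pointwise} then yields $U_n(x,t) \to U(x,t)$ pointwise, and the inequality above transfers to $U$, proving the theorem when $g$ is bounded. For unbounded Lipschitz $g$, I would truncate via $g_M := (g \wedge M) \vee (-M)$, which remains Lipschitz with constant $C$. Writing $U^M(x,t) := \E g_M(X^x_t)$, the previous step gives $\bigabs{U^M(y,t) - U^M(x,t)} \le C(y-x)$. Since $g$ has linear growth and $\E\abs{X^x_t} \le C_1(\Psi(x)+1)e^{C_1 t} + C_1$ by \eqref{xpsi} and Lemma~\ref{Lpsixbound}, dominated convergence gives $U^M(x,t) \to U(x,t)$ as $M \to \infty$, and the Lipschitz bound persists.

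The main obstacle is structural rather than computational: a direct comparison principle is not available for the generalized diffusion $X^x$, whose trajectory may jump across gaps in $\supp m$. The remedy is the approximation by honest diffusions $X^{n,x}$ outlined above. Observe that the martingale hypothesis \eqref{martingality} on $m$ is used only to guarantee that $U(x,t) = \E g(X^x_t)$ is well defined when $g$ has linear growth; it need not be inherited by the $m_n$, since boundedness of $\sigma_n$ alone already yields the martingale property used in the displayed estimate.
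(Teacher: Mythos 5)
Your proof is correct, but it takes a genuinely different route from the paper's. The paper deduces Lipschitz continuity in two lines from Theorem~\ref{monotonicity}: since $g$ is $C$-Lipschitz, the functions $Cx\pm g(x)$ are non-decreasing and, thanks to \eqref{martingality} (which forces $x=o(\Psi(x))$), lie in the admissible growth class; hence $\E\bigl[CX^x_t\pm g(X^x_t)\bigr]=Cx\pm U(x,t)$ is non-decreasing in $x$, the martingale property of $X$ itself being used to pull $CX^x_t$ out of the expectation, and the two monotonicity statements combine to give $|U(x,t)-U(y,t)|\le C|x-y|$. You instead inline the approximation machinery: you rerun the smoothing $m_n\to m$, invoke the comparison theorem and the martingale property of the approximating diffusions $Y^{n,x}$ (guaranteed by the boundedness of $\sigma_n$) to obtain the uniform bound $|U_n(y,t)-U_n(x,t)|\le C(y-x)$ --- an estimate that in fact appears verbatim in the proof of Lemma~\ref{lowerbound} --- and then pass to the limit, first for bounded $g$ and then by truncation. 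Both arguments rest on the same engine, but the paper's factorization through Theorem~\ref{monotonicity} is shorter given what precedes it, whereas yours is self-contained and makes explicit that the constant $C$ survives the limit because each $Y^{n}$ is a true martingale regardless of whether $X$ is. Two small caveats: under the stated hypotheses $\E g(X^x_t)$ is finite even without \eqref{martingality} (linear growth of $g$ together with Lemma~\ref{Lpsixbound} and \eqref{xpsi} suffices), so what \eqref{martingality} really buys is $g=o(\Psi)$, i.e.\ that $U$ is the solution in the uniqueness class of Theorem~\ref{main}; and your limiting lemmas give the bound for $t>0$ only, the case $t=0$ following from the continuity of $U$ (Lemma~\ref{cont}) or from checking directly that $\og$ inherits the Lipschitz constant $C$ from $g$.
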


\begin{proof}
The martingale condition \eqref{martingality} implies that $\Phi (x)\to \pm \infty$ as $x\to\pm\infty$, and thus $x=o(\Psi(x))$.
If $U$ is a solution to \eqref{mpde1}--\eqref{mpde2},  
then $Cx\pm U(x,t)$ are solutions to \eqref{mpde1}
with the initial values $Cx \pm \overline g(x)$. 
Since $Cx \pm g(x)$ are non-decreasing, Theorem~\ref{monotonicity} shows that 
$Cx\pm U(x,t)$ are non-decreasing, and thus $\vert U(x,t)-U(y,t)\vert\leq C\vert x-y\vert$.
\end{proof}

\begin{theorem}[Convexity]
\label{convexity}
Assume that $m$ satisfies \eqref{martingality} so that the corresponding generalized diffusion 
$X$ is a martingale. If $g$ is convex, then the function $U(x,t)=\E g(X_t^x)$ is convex in $x$ for any fixed $t\geq 0$.
\end{theorem}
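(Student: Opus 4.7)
The plan is to approximate in two steps, reducing to the case of smooth convex $g$ with bounded derivatives and a smooth strictly positive speed measure, where convexity preservation follows from standard PDE arguments. Following the approach of Theorem~\ref{monotonicity}, first take $m_n := \psi_n * (m + n^{-1}\lambda)$, so $m_n \to m$ vaguely and each $m_n$ has a smooth positive density $1/\sigma_n^2$. The corresponding process $X^n$ is a regular one-dimensional martingale diffusion $dX^n = \sigma_n(X^n)\,dW$, and $U_n(x,t) := \E g(X^{n,x}_t)$ is a classical solution of the parabolic equation $(U_n)_t = (\sigma_n^2/2) (U_n)_{xx}$. For smooth convex $g$ with bounded first and second derivatives, differentiating this equation twice in $x$ shows that $V := (U_n)_{xx}$ satisfies a linear parabolic equation with non-negative initial data $V(\cdot,0) = g'' \geq 0$, and the parabolic maximum principle gives $V \geq 0$; hence $U_n(\cdot, t)$ is convex. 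Lemma~\ref{pointwise} yields $U_n \to U$ pointwise, and convexity passes to the limit.

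The harder step is to approximate a general convex $g = o(\Psi)$ by such smooth convex functions. The usual truncation $(g \wedge M) \vee (-M)$ destroys convexity, so another strategy is needed. The key observation is that the martingale condition \eqref{martingality} gives $\E \ell(X_t^x) = \ell(x)$ for every affine $\ell$; consequently subtracting an affine function from $g$ subtracts the same function from $U$, leaving convexity unaffected. Subtracting the tangent line of $g$ at $0$, I may assume $g \geq 0$. Any non-negative convex function is the pointwise supremum of a countable family of affine minorants $(\ell_k)$, so the sequence $g_n := \max(0, \ell_1, \dots, \ell_n)$ of non-negative Lipschitz convex functions satisfies $g_n \uparrow g$. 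Mollifying, $g_n * \psi_k$ is smooth and convex with bounded derivatives, so the first step applies.

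It then remains to pass to the limit in both approximations. For the mollification step, dominated convergence with the uniform bound $|g_n * \psi_k| \leq |g_n| + 1$, combined with Lemma~\ref{loc} (applicable since $g_n$ is Lipschitz and $|x| = o(\Psi(x))$ under \eqref{martingality}), yields pointwise convergence of the corresponding expectations. For the outer approximation, $g_n \uparrow g$ and monotone convergence give $U_n \uparrow U$. Convexity is preserved under each pointwise limit. The main obstacle here is precisely this approximation step: since convexity is lost under truncation to a bounded function, the usual reduction employed in the proofs of Theorems~\ref{monotonicity} and~\ref{main} is not directly applicable, and the affine-minorant representation of convex functions seems essential.
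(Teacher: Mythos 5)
Your reduction of a general convex $g$ to smooth Lipschitz data (subtract an affine minorant using the martingale property, write the result as an increasing limit of maxima of finitely many affine functions, mollify, and pass to the limit by monotone and dominated convergence) is correct and genuinely different from the paper's route: the paper never needs such a reduction because it invokes the convexity-preservation result of \cite{JT} for \emph{general} convex $g$ and instead puts all the work into choosing the approximating speed measures. The genuine gap in your argument is exactly at that latter point, in the step you treat as routine: the convergence $U_n(x,t)=\E g(X^{n,x}_t)\to\E g(X^x_t)$ as the speed measures $m_n\to m$ vaguely. You justify it by Lemma~\ref{pointwise}, but that lemma assumes $g$ bounded, whereas your $g$ at that stage is Lipschitz and hence unbounded. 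Almost sure convergence $X^{n,x}_t\to X^x_t$ must be supplemented by uniform integrability of $\{g(X^{n,x}_t)\}_{n\ge1}$, uniform over the \emph{sequence of speed measures}; neither Lemma~\ref{loc} (which concerns a single fixed speed measure, with uniformity only in $(x,t)$) nor dominated convergence provides this. The paper's proof is engineered precisely around this difficulty: it does not take $m_n=\psi_n*(m+n^{-1}\lambda)$, but constructs smooth convex $\Psi_n\ge\Psi$ with $\Psi_n\to\Psi$ and sets $m_n(dx)=\frac{1}{2|x|}\Psi_n''(x)\,dx$, so that \eqref{xpsi} holds with the \emph{same} constant $C_1$ for every $n$; Lemma~\ref{Lpsixbound} then gives $\E\Psi(X^{n,x}_t)\le\E\Psi_n(X^{n,x}_t)\le(\Psi_n(x)+1)e^{C_1t}$, bounded in $n$, and uniform integrability follows from $g=o(\Psi)$ (which your Lipschitz $g$ satisfies under \eqref{martingality}). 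With your choice of $m_n$ you would have to establish a comparable uniform lower bound on $\Psi_n$, or a uniform moment bound on $X^{n,x}_t$, from scratch; nothing in your proposal does this.

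A secondary weak point is the claim that convexity of $U_n(\cdot,t)$ for the regularized diffusion ``follows from standard PDE arguments'' by differentiating $(U_n)_t=\tfrac12\sigma_n^2(U_n)_{xx}$ twice and applying the parabolic maximum principle to $V=(U_n)_{xx}$. The equation for $V$ carries the zeroth-order coefficient $(\sigma_n^2/2)''$, which need not be bounded on $\R$ (the density of $m_n$ is bounded below but not above globally, so $\sigma_n$ may degenerate at infinity), the domain is unbounded, and one must also know that $V$ is a classical solution continuous down to $t=0$. This is why the paper delegates this step to \cite{JT}, whose proof is probabilistic rather than a naive maximum-principle argument. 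The fact you need here is true, but as written it is asserted rather than proved.
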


\begin{remark}
Preservation of convexity has been widely studied in the financial mathematics literature, see \cite{JT} 
and the references therein. 
Note that Theorem~\ref{convexity} includes the case of 
general martingale diffusions since we have no pointwise growth condition on the 
diffusion coefficents at infinity.
\end{remark}

\begin{proof}
Without loss of generality we assume that the measure $m$ has no point mass at zero 
(this can be achieved by translation).
First approximate $\Psi$ with smooth convex functions $\Psi_n$ such that 
\begin{itemize}
\item
$\Psi_n\geq \Psi$
\item
$\Psi_n\geq \vert x\vert^3/n$ for $\vert x\vert\geq n$
\item
$\Psi_n(x)=\Psi_n(0)+\vert x\vert^3$ for $\vert x\vert\leq 1/n$
\item
$\Psi_n\to\Psi$ pointwise as $n\to\infty$
\item 
the measure $m_n$ defined by 
$m_n(dx)=\frac{1}{2\vert  x\vert}\Psi_n^{\prime\prime}(x)\,dx$ has a
strictly positive density,
\end{itemize}
and let $X^n$ be the corresponding 
generalized diffusion. Then $m_n\to m$ vaguely as $n\to\infty$, so $X^{n,x}_t\to X^x_t$ almost surely by \ref{vaguelim} in 
Section ~\ref{S:construction}.
By \eqref{xpsi}, $\vert x\vert\leq C_1(\Psi_n(x)+1)$ holds with 
the same constant $C_1$ uniformly in $n$. Using the arguments of Lemmas~\ref{Lpsixbound} and \ref{loc}
yields that $\{g(X^{n,x}_t)\}$ is uniformly integrable in $n$ provided that $g(x)=o(\Psi(x))$ as $\vert x\vert \to\infty$.
Consequently, $U_n(x,t) :=\E g(X^{n,x}_t) \to \E g(X^x_t)=: U(x,t)$ pointwise as $n\to\infty$.

Now, if $g$ is convex, then $U_n(x,t)$ is convex in $x$ for each fixed $t$, see for example \cite{JT}.
Since the pointwise limit of a sequence of convex functions is convex, the result follows.
\end{proof}

\section{Examples}
\label{S:examples}
In this section we consider a few explicit examples of generalized diffusions and the corresponding
Feynman-Kac type theorems. 

\subsection{Brownian motion with a sticky point}
\label{examples-sticky}
In this section we study the particular case in which $m= \delta +\lambda$, where $\delta$ is a Dirac measure at 
0 and $\lambda$ is the Lebesgue measure. The corresponding generalized diffusion 
$X$ then behaves like a Brownian motion outside the point 0, which is called a sticky point for $X$ (see 
\cite{A} for a study of sticky Brownian motion).

For a given continuous initial condition $g(x)=o(\vert x\vert^3)$ we write $g=g_e+ g_o$ as the sum of an even and an odd 
function, where 
\[g_e(x):=(g(x)+g(-x))/2\]
and 
\[g_o(x):=(g(x)-g(-x))/2.\]
Now, consider the classical initial boundary value problem 
\[\left\{\begin{array}{ll}
u_t=\frac12u_{xx} & (x,t)\in(0,\infty)^2\\
u=g_e & (x,t)\in [0,\infty)\times\{0\}\\
u_t=u_x & (x,t)\in\{ 0\}\times(0,\infty).\end{array}\right.\]
By standard parabolic theory, this problem admits a unique solution in our
class, compare \cite[Section V.4]{L}. 
(Alternatively, 
for suitable $g_e$,
we can use the transformation 
$w(x,t)=u(x,t)-\int_0^t u_x(x,s)\,ds$, which solves
$w_t=\frac12w_{xx}-g_{e,x}(x)$ 
with boundary values $w(x,0)=g_e(x)$ and $w(0,t)=g_e(0)$, so $w_t(0,t)=0$;
we omit the details.)
Similarly, let $v$ be the unique solution of the initial boundary value problem
\[\left\{\begin{array}{ll}
v_t=\frac12v_{xx} & (x,t)\in(0,\infty)^2\\
v=g_o & (x,t)\in [0,\infty)\times\{0\}\\
v=0 & (x,t)\in\{ 0\}\times(0,\infty).\end{array}\right.\]
Then the function 
\[U(x,t):= u(\vert x\vert , t) + \frac{x}{\vert x\vert}v(\vert x\vert,t)\] 
solves \eqref{mpde1}--\eqref{mpde2}. 
Consequently, 
\[U(x,t)=\E g(X^x_t).\]

\begin{example}
Let $g(x)=\vert x\vert + x^2$. Then $g_e=g$ and $U(x,t)=\vert x\vert + x^2 + t$. 
\end{example}

\begin{example}
Let 
$g(x)=\vert x\vert + x^2+2\cos x - \sin |x|$. Again, $g_e=g$, and $U(x,t)=|x| +t +x^2 + e^{-t/2}(2\cos x - \sin |x|)$.
Consequently, $U(x,0)$ is $C^2$, but for all positive $t$ the solution $U$ has a kink at $x=0$.
\end{example}

\begin{remark}
Recall that solutions of parabolic equations with positive H\"older continuous diffusion coefficient gain two spatial 
derivatives. Thus starting with continuous initial data, the solution is twice 
continuously differentiable in space for any positive $t$.
The first example above shows that there is no such general gain in regularity at points where the speed measure 
is singular with respect to Lebesgue measure.
In the second example the initial data is twice continuously differentiable, but the solution is
only Lipschitz in space. Thus, in this case regularity is even lost.
\end{remark}

\subsection{Brownian motion skipping an interval}
\label{examples-skip}
Now, let the speed measure $m$ be Lebesgue measure on $\R\setminus (-1,1)$ and 0 on $(-1,1)$.
The corresponding generalized diffusion $X$ behaves like a Brownian motion outside $(-1,1)$, 
and it spends no time in $(-1,1)$.

Again, write $g=g_e+g_o$ with $g_e$ and $g_o$ as above. 
Let $u$ and $v$ be the unique solutions (of order $o(\vert x^3\vert)$) of
the problems 
\[\left\{\begin{array}{ll}
u_t=\frac12u_{xx} & (x,t)\in(1,\infty)\times(0,\infty)\\
u=g_e & (x,t)\in [1,\infty)\times\{0\}\\
u_x=0 & (x,t)\in\{ 1\}\times(0,\infty)\end{array}\right.\]
and
\[\left\{\begin{array}{ll}
v_t=\frac12v_{xx} & (x,t)\in(1,\infty)\times(0,\infty)\\
v=g_o & (x,t)\in [1,\infty)\times\{0\}\\
v_x=v & (x,t)\in\{ 1\}\times(0,\infty),\end{array}\right.\]
respectively. 
Then the function 
\[U(x,t):= \left\{\begin{array}{ll}
u(\vert x\vert , t) + \frac{x}{\vert x\vert}v(\vert x\vert,t) & \vert x\vert \geq 1\\
u(1,t)+v(1,t)x & \vert x\vert<1
\end{array}\right.\] 
solves \eqref{mpde1}--\eqref{mpde2}. 
Consequently, 
\[U(x,t)=\E g(X^x_t).\]

\begin{example} 
Consider the initial value $g(x)=\overline g(x)=\max\{ \vert x\vert, 1\}$.  
Using the recipe above, one finds that the unique solution (of order
$o(\vert x\vert^3)$) of \eqref{mpde1}--\eqref{mpde2} 
is given by 
\[U(x,t)=
\begin{cases}
|x|-2(\vert x|-1)\int_{-\infty}^{-(|x|-1)/\sqrt{t}} \phi(y)\,dy 
+2\sqrt t\phi((|x|-1)/\sqrt t), & \vert x\vert >1,
\\[3pt]
1 + \sqrt{2 t/\pi}, & \vert x\vert \leq 1  ,
\end{cases}
\]
where 
\[\phi(y):= \frac{1}{\sqrt{2\pi}}e^{-y^2/2}\]
is the density of the standard normal distribution.
It is straightforward to check that $x\mapsto U(x,t)$ is $C^1$ for $t>0$, but it fails to be $C^2$ since
\[U_{xx}(1+,t) = U_{xx}(-1-,t)=\sqrt{\frac{2}{\pi t}}\not = 0= U_{xx}(1-,t)= U_{xx}(-1+,t).\]
\end{example}

\section{Regularity}
\label{S:regularity}

\begin{theorem}
\label{regularity}
Let the assumptions in Theorem~\ref{main} hold.
Then the function $U$ solving \eqref{mpde1}--\eqref{mpde2}  
is infinitely differentiable in $t$; moreover, 
$U$ and all its derivatives with respect to $t$ and are locally
Lipschitz on $\R\times (0,\infty)$. 
\end{theorem}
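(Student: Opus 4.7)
The plan is to prove first that $U_t$ exists and is locally Lipschitz on $\R\times(0,\infty)$, and then to obtain all higher $t$-derivatives by iteration. The main tools will be the Markov semigroup identity
\[
U(x,t+s)=\E\bigl[U(X^x_s,t)\bigr],\qquad s,t\ge0,
\]
(which follows from the strong Markov property of $X^x$ together with $U(y,\cdot)=\E g(X^y_\cdot)=\E\og(X^y_\cdot)$) and the smooth-approximation apparatus developed in Section~\ref{S:existence}.

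First I would reduce to $g\in C_b^\infty(\R)$ by combining the truncation argument completing the proof of Theorem~\ref{main} with the mollification in Lemma~\ref{Lbdd}; since regularity is a local statement, Lemma~\ref{loc} guarantees that any uniform-in-the-approximation local estimate transfers back to the original $U$. Next, as in Lemma~\ref{lowerbound}, approximate $m$ by measures $m_n$ with smooth strictly positive densities, so that $U_n(x,t)=\E g(X^{x,n}_t)$ is a genuine $C^\infty$ solution of the classical parabolic equation $2m_n(U_n)_t=(U_n)_{xx}$, and $U_n\to U$ pointwise by Lemma~\ref{pointwise}.

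The technical heart of the proof is to establish uniform-in-$n$ local Lipschitz bounds for $U_n$ and each of its $t$-derivatives on compact subsets of $\R\times(0,\infty)$. For Lipschitz control in $x$, the comparison argument from the proof of Lemma~\ref{lowerbound} already gives a bound independent of $n$, provided one first absorbs a small time-shift $s_0>0$ and works with the initial data $U_n(\cdot,s_0)$ via the semigroup property. For Lipschitz control in $t$, the decisive identity---obtained by combining the Markov property with an It\^o--Tanaka expansion of $U_n(\cdot,t)$ about $x$ and then using the classical PDE to rewrite $dy$ as $m_n(dy)$---is
\begin{align*}
\E\bigl[U_n(X^{x,n}_h,t)-U_n(x,t)\bigr]
&=\tfrac12\,\E\int (U_n)_{xx}(y,t)\,L^{y-x}_{A^{x,n}_h}\,dy\\
&=\E\int (U_n)_t(y,t)\,L^{y-x}_{A^{x,n}_h}\,m_n(dy).
\end{align*}
Combining this with $\E\int L^{y-x}_{A^{x,n}_h}\,m_n(dy)=\E\Gamma^{x,n}_{A^{x,n}_h}=h$ and a local supremum of $(U_n)_t$ on a small neighborhood yields the required $O(h)$ bound and hence uniform-in-$n$ Lipschitz control in $t$. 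Higher $t$-derivatives follow by iteration, since $V_n=(U_n)_t$ satisfies the same PDE and, after the first iteration, has initial data at $s_0$ of the same qualitative type as $g$.

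The main obstacle is exactly this sharp $O(h)$ bound, since the naive estimate $|U_n(X^{x,n}_h,t)-U_n(x,t)|\le L\,\E|X^{x,n}_h-x|$ yields only $O(\sqrt h)$ in general: the martingale property of $X^{x,n}$ kills the first-order term but $\E A^{x,n}_h=\E|X^{x,n}_h-x|^2$ need not be $O(h)$ uniformly in $n$ as $m_n$ tends to a singular limit. What saves the day is the rewriting of the Lebesgue integral as an $m_n$-integral via the PDE, which allows the potentially unbounded $(U_n)_{xx}$ to cancel against the singular behavior of $m_n$ so that the product $(U_n)_t=(U_n)_{xx}/(2m_n)$ stays bounded uniformly on compact subsets---the very ingredient needed to pass to the limit.
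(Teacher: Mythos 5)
There is a genuine gap, and it sits exactly at the point you identify as ``the technical heart.'' Your argument converts a uniform-in-$n$ local supremum bound on $(U_n)_t$ (and, for the spatial Lipschitz estimate after the time shift $s_0$, a uniform local bound on $(U_n)_x(\cdot,s_0)$) into Lipschitz estimates for $U_n$ --- but it never produces those bounds. The It\^o--Tanaka/Markov identity and the rewriting of $\int (U_n)_{xx}\,dy$ as $\int 2(U_n)_t\,m_n(dy)$ are fine, and $\E\Gamma^{x,n}_{A^{x,n}_h}=h$ does hold for the smooth approximations; but once you ``take a local supremum of $(U_n)_t$'' you have assumed the conclusion, since for smooth $U_n$ a uniform local bound on $(U_n)_t$ already gives the $t$-Lipschitz estimate trivially. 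Your closing assertion that ``$(U_n)_t=(U_n)_{xx}/(2m_n)$ stays bounded uniformly on compact subsets'' is precisely the statement that requires proof: the only quantitative bound of this type available from Section~\ref{S:existence} is \eqref{claim}, whose constant $D/(2\ep)$ blows up because the approximating densities are not bounded below uniformly in $n$. The same circularity affects the spatial estimate: the comparison argument of Lemma~\ref{lowerbound} propagates a Lipschitz constant of the initial data, but $g$ is only continuous with $g=o(\Psi)$, and replacing $g$ by $U_n(\cdot,s_0)$ requires a uniform Lipschitz bound at time $s_0$ --- again the thing to be proved. (The reduction to $g\in C_b^\infty$ is also shakier than you suggest: pointwise convergence of the approximating solutions does not transfer Lipschitz regularity unless the local Lipschitz constants are uniform in the $g$-approximation, which again needs interior estimates depending only on local sup bounds of the solution.)

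The paper's proof spends all of its effort on exactly this missing ingredient. It derives uniform-in-$n$ energy estimates (Lemma~\ref{dubbelintegraler}): $\partial_t^kU_n$ is bounded in $L^2(m_n(dx)\,dt)$ and $\partial_x\partial_t^kU_n$ in $L^2(dx\,dt)$ on compact rectangles, using integration by parts against test functions $\psi$ built from $m_n$ itself (so that $\psi_{xx}/\mu$ and $(\psi_x/\mu)_x$ stay bounded uniformly); these are upgraded to bounds uniform in $t$ (Lemma~\ref{enkelintegraler}) and then to local sup and Lipschitz bounds via Cauchy--Schwarz, the identity $u_x(y,t)-u_x(x,t)=\int_x^y 2u_t\,m_n(dz)$, and the fact that $m_n$ has mass bounded below on intervals meeting $\supp m$ (Lemma~\ref{rehnskiold}). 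Only then does Arzel\`a--Ascoli finish the argument. To repair your proposal you would need to supply an independent derivation of these uniform interior derivative bounds; the probabilistic identities you list do not do that.
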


In view of the examples in Section~\ref{examples-sticky}, this regularity result is sharp.
Together with the remark after Theorem \ref{main}, Theorem~\ref{regularity} yields the following.

\begin{corollary}
\label{abscont}
The distribution $U_{xx}$ is a measure on $\R\times (0,\infty)$
that is absolutely continuous with respect to $m\times \lambda$. 
Moreover, 
for each fixed $t>0$, $U_{xx}$ is a measure on $\R$
that is absolutely continuous with respect to $m$. 
\end{corollary}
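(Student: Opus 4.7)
The plan is to bootstrap the distributional PDE of Theorem~\ref{main} using the regularity asserted in Theorem~\ref{regularity}. Since Theorem~\ref{regularity} gives $U_t\in C(\R\times(0,\infty))$, and the relation $U_{xx}=2mU_t$ holds in $\mathcal D'(\R\times(0,\infty))$ with $m$ regarded as the positive Radon measure $m\times\lambda$ there, the right-hand side is no longer a formal product of distributions: it is the locally finite signed Radon measure obtained by multiplying the locally finite positive measure $m\times\lambda$ by the locally bounded continuous function $2U_t$. Thus $U_{xx}$ coincides with $2U_t\cdot(m\times\lambda)$ as a distribution, hence as a measure, and is trivially absolutely continuous with respect to $m\times\lambda$ with Radon--Nikodym density $2U_t$.

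For the second (fixed-$t$) claim, I would slice the distributional identity in the $t$ variable. Fix $t_0>0$ and $\varphi\in\mathcal D(\R)$, and let $\psi_\ep\in\mathcal D((t_0-\ep,t_0+\ep))$ be an approximate identity. Testing \eqref{distribution} against $\varphi(x)\psi_\ep(t)$, by Fubini both sides become integrals against $\psi_\ep$ of the $t$-functions
\[
h_1(t):=\int_\R U(x,t)\varphi''(x)\,dx\qquad\text{and}\qquad h_2(t):=2\int_\R U_t(x,t)\varphi(x)\,m(dx).
\]
Both $h_1$ and $h_2$ are continuous: $h_1$ by continuity of $U$ and dominated convergence on the compact support of $\varphi$, and $h_2$ by continuity of $U_t$ together with local finiteness of $m$. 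Letting $\ep\to 0$ yields $h_1(t_0)=h_2(t_0)$, i.e.
\[
\int_\R U(x,t_0)\varphi''(x)\,dx=2\int_\R U_t(x,t_0)\varphi(x)\,m(dx)
\]
for every $\varphi\in\mathcal D(\R)$. This is precisely $U_{xx}(\cdot,t_0)=2U_t(\cdot,t_0)\,m$ in $\mathcal D'(\R)$, exhibiting $U_{xx}(\cdot,t_0)$ as a signed Radon measure absolutely continuous with respect to $m$ with density $2U_t(\cdot,t_0)$.

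The corollary is really a bookkeeping consequence of Theorem~\ref{regularity}: once $U_t$ is known to be a continuous function, the identity $U_{xx}=2mU_t$ automatically upgrades from a distributional equation to an equation of measures. The only mildly nontrivial step is the slicing argument used to recover the fixed-$t$ identity from the joint-in-$(x,t)$ one, and this reduces to continuity in $t$ of the two pairings displayed above, both of which are immediate from Theorem~\ref{regularity} together with local finiteness of $m$.
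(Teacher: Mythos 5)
Your proposal is correct and follows exactly the route the paper intends: the paper gives no written proof of Corollary~\ref{abscont} beyond the observation that it follows from Theorem~\ref{regularity} together with the remark after Theorem~\ref{main}, and your argument is precisely the fleshing-out of that remark (continuity of $U_t$ turns $2mU_t$ into a genuine measure with density $2U_t$ with respect to $m\times\lambda$, and the slicing argument justifies the fixed-$t$ version that the remark asserts as equivalent). The only detail worth making explicit is the integration by parts in $t$ needed to pass from the pairing with $\varphi(x)\psi_\ep'(t)$ in \eqref{distribution} to your $h_2$, which is licensed by the continuity of $U_t$ from Theorem~\ref{regularity}.
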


\begin{remark}
The result in Corollary~\ref{abscont} can be viewed as parabolic regularity 
in our setting. 
Indeed, two spatial derivatives in regularity are gained by the solution,
when we measure regularity with respect to $m$.
\end{remark}

\begin{corollary}
\label{c1}
Assume that $m$ is absolutely continuous with respect to Lebesgue measure $\lambda$.
Then $x\mapsto \frac{\partial^k }{\partial t^k}U(x,t)$ is $C^1$ for all $t>0$ and $k\geq 0$.
Moreover, if the Radon--Nikodym derivative $\frac{dm}{d\lambda}$ is
continuous, then $x\mapsto \frac{\partial^k }{\partial t^k}U(x,t)$ is $C^2$ for all $t>0$ and $k\geq 0$.
\end{corollary}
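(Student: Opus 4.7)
The plan is to read the PDE \eqref{mpde1} as an explicit formula for the distributional second derivative $U_{xx}$ in terms of the speed measure $m$ and $U_t$, and then to leverage the time regularity from Theorem~\ref{regularity}.

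First I would reduce to $k=0$ by iterating the PDE in $t$. Setting $V:=\partial_t^k U$, differentiating the distributional identity $U_{xx}=2mU_t$ in $t$ a total of $k$ times (legitimate since $m$ is $t$-independent, and since by Theorem~\ref{regularity} all $t$-derivatives of $U$ exist as continuous functions) yields
\begin{equation*}
V_{xx}=2mV_t\qquad\text{in }\mathcal D'(\R\times(0,\infty)).
\end{equation*}
Again by Theorem~\ref{regularity}, $V_t=\partial_t^{k+1}U$ is locally Lipschitz on $\R\times(0,\infty)$, so the remark after Theorem~\ref{main} lets me read this equation pointwise in $t$: for each fixed $t>0$,
\begin{equation*}
V_{xx}(\cdot,t)=2V_t(\cdot,t)\,m\qquad\text{in }\mathcal D'(\R).
\end{equation*}

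Second, I would feed in the absolute continuity of $m$. Writing $\mu:=dm/d\lambda$, the right-hand side above equals $2\mu(x)V_t(x,t)$, a product of a locally integrable function of $x$ and a locally bounded function; it is therefore itself locally integrable in $x$. Thus $V_{xx}(\cdot,t)$ is represented by an $L^1_{\mathrm{loc}}$ function, and the elementary one-variable fact that a continuous function on $\R$ with $L^1_{\mathrm{loc}}$ distributional second derivative is actually $C^1$ (antidifferentiate $V_{xx}(\cdot,t)$ once to obtain a continuous function differing from $V_x(\cdot,t)$ by a constant) gives $V(\cdot,t)\in C^1(\R)$. When $\mu$ is continuous, $2\mu(x)V_t(x,t)$ is continuous in $x$, and the same principle upgrades $V(\cdot,t)$ to $C^2(\R)$.

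There is no real obstacle: the corollary reduces to Theorem~\ref{regularity} plus a one-line distributional regularity upgrade. The only point requiring a moment of attention is the commutation of $\partial_t^k$ with the PDE and the passage from the space-time distribution equation to one on $\R$ for each fixed $t$; both are handled exactly as in the remark after Theorem~\ref{main}, using the continuity of $V_t$ in $t$ supplied by Theorem~\ref{regularity}.
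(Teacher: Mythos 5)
Your argument is correct and is precisely the reasoning the paper leaves implicit: Corollary~\ref{c1} is stated without a separate proof, being understood as a direct consequence of Theorem~\ref{regularity} (continuity of all $\partial_t^k U$, hence of $V_t$) combined with the remark after Theorem~\ref{main} (reading $V_{xx}=2mV_t$ as a distributional identity on $\R$ for each fixed $t$), followed by exactly the one-variable regularity upgrade you describe. Nothing is missing.
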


\begin{remark}
The example in Section~\ref{examples-skip} illustrates the sharpness of
Corollary~\ref{c1}. 
\end{remark}

To prove Theorem~\ref{regularity},
let $m$ be a locally finite nonnegative Borel measure on $\R$ such that \eqref{unbounded_support}
holds, and let $g(x)=o(\Psi(x))$ as $\vert x\vert \to\infty$.
Let, as in the proof of Theorem \ref{convexity}, 
 $\Psi_n$ be a sequence of smooth convex functions satisfying
\begin{itemize}
\item
$\Psi_n\geq \Psi$
\item
$\Psi_n(x)=\Psi_n(0)+\vert x\vert^3$ for $\vert x\vert\leq 1/n$
\item
$\Psi_n\to\Psi$ pointwise as $n\to\infty$
\item 
the measure $m_n$ defined by $m_n(dx)=\frac{1}{2\vert
  x\vert}\Psi_n^{\prime\prime}(x)\,dx$ has a strictly positive density,
\end{itemize}
and let $U_n$ be the corresponding solutions
to \eqref{mpde1}--\eqref{mpde2} for the measure $m_n$. 
Lemma~\ref{Lpsixbound} then holds with the same constant $C_1$ independent
of $n$, for the corresponding generalized diffusion $X^n$,
which implies that $U_n$ is locally bounded uniformly in $n$. 
Consequently, the following lemma holds.

\begin{lemma}
For every rectangle $[a,b]\times [t_1,t_2]\subset
\R\times (0,\infty)$ we have
\begin{equation}
\label{L2bound}
\sup_n\int_a^b\int_{t_1}^{t_2} U_n^2(x,t)\,dt\,m_n(dx) <\infty.
\end{equation}
\end{lemma}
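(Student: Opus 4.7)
The plan is to combine two uniform local bounds: one on $U_n$ (which has just been asserted in the text preceding the lemma) and one on the measures $m_n$ (which follows from vague convergence).

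First, I would make explicit the uniform pointwise bound on $U_n$. Because $g(x)=o(\Psi(x))$, for every $\varepsilon>0$ there is $C_\varepsilon$ with $|g|\le\varepsilon\Psi+C_\varepsilon$. Applying this to $U_n(x,t)=\E g(X^{n,x}_t)$ and using the version of Lemma~\ref{Lpsixbound} that holds uniformly in $n$ (with the same constant $C_1$, because by construction $|x|\le C_1(\Psi_n(x)+1)$ holds uniformly in $n$) yields
\[
|U_n(x,t)|\le \varepsilon\bigl(\Psi(x)+1\bigr)e^{C_1 t}+C_\varepsilon,
\]
and hence
\[
M:=\sup_n\sup_{(x,t)\in[a,b]\times[t_1,t_2]}|U_n(x,t)|<\infty.
\]
This is essentially the statement ``$U_n$ is locally bounded uniformly in $n$'' made in the paragraph above the lemma.

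Second, I would extract a uniform bound on the total mass of $m_n$ on $[a,b]$ from the vague convergence $m_n\to m$. Pick any $\phi\in C_c^+(\R)$ with $\phi\equiv 1$ on $[a,b]$. Then
\[
m_n([a,b])\le\int\phi\,dm_n\ \xrightarrow[n\to\infty]{}\ \int\phi\,dm<\infty,
\]
so $\sup_n m_n([a,b])<\infty$.

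Combining the two gives at once
\[
\int_a^b\int_{t_1}^{t_2}U_n^2(x,t)\,dt\,m_n(dx)\le M^2(t_2-t_1)\,m_n([a,b]),
\]
whose supremum over $n$ is finite, proving \eqref{L2bound}. There is no real obstacle here; the only thing to watch is that both ``locally uniform'' bounds genuinely use ingredients already in hand (the $n$-independent growth constant $C_1$ coming from the chosen approximation $\Psi_n$, and the fact that vague convergence to a locally finite measure automatically yields uniformly bounded mass on any compact set).
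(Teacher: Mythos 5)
Your argument is correct and is essentially the paper's own (the paper simply asserts the lemma as an immediate consequence of the uniform local boundedness of $U_n$ stated in the preceding paragraph, leaving the bound on $m_n([a,b])$ implicit). The only cosmetic slip is that the uniform version of Lemma~\ref{Lpsixbound} gives $|U_n(x,t)|\le\varepsilon(\Psi_n(x)+1)e^{C_1t}+C_\varepsilon$ with $\Psi_n$ rather than $\Psi$ on the right, but since the convex functions $\Psi_n$ converge pointwise to $\Psi$ they are uniformly bounded on compacts, so your conclusion stands.
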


Classical regularity theory implies that each
$U_n$ is smooth on $\R\times(0,\infty)$. We have the following
$L^2$-estimates for  
derivatives of $U_n$.

\begin{lemma}
\label{dubbelintegraler}
We have
\begin{equation}
\label{estimate2}
\sup_n\int_a^b\int_{t_1}^{t_2}
\left(\frac{\partial^{k} }{\partial t^k}U_n(x,t)\right)^2\,dt\,m_n(dx) <\infty 
\end{equation}
and
\begin{equation}
\label{estimate1}
\sup_n\int_a^b\int_{t_1}^{t_2} \left(\frac{\partial^{k+1} }{\partial x\partial t^k}U_n(x,t)\right)^2\,dt\,dx <\infty
\end{equation}
for all $k\geq 0$ and any rectangle $[a,b]\times [t_1,t_2]\subset
\R\times (0,\infty)$.
\end{lemma}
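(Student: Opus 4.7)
The plan is to prove the two estimates simultaneously by induction on $k$, using parabolic energy identities for the smooth equation $U_{n,xx}=2\rho_n U_{n,t}$, where $\rho_n:=dm_n/d\lambda$. The key structural fact is that $\rho_n$ is independent of $t$, so $w_n:=\partial_t^k U_n$ again satisfies $w_{n,xx}=2\rho_n w_{n,t}$. Hence the induction reduces to the following: given a smooth solution $w$ of this equation with a uniform local bound $\iint w^2\,m_n(dx)\,dt\le C$ (the base case being supplied by \eqref{L2bound}), establish locally the two estimates $\iint w_x^2\,dx\,dt\le C'$ and $\iint w_t^2\,m_n(dx)\,dt\le C''$, uniformly in $n$.

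Fix $\chi\in C_c^\infty(\R\times(0,\infty))$ equal to $1$ on $[a,b]\times[t_1,t_2]$. For the $w_x$-bound, I would multiply the equation by $w\chi^2$ and integrate by parts once in $x$ and once in $t$, obtaining
\[
\iint w_x^2\chi^2\,dx\,dt=-2\iint w\,w_x\,\chi\chi_x\,dx\,dt+2\iint \rho_n w^2\chi\chi_t\,dx\,dt;
\]
Cauchy--Schwarz absorbs half of the left-hand side, leaving residual integrals of $w^2\chi_x^2\,dx\,dt$ and $w^2|\chi\chi_t|\,m_n(dx)\,dt$ on the right. For the $w_t$-bound, I would test against $w_t\chi^2$; after integration by parts in $x$ followed by the identity $w_x w_{xt}=\tfrac12(w_x^2)_t$ and integration by parts in $t$, one obtains
\[
2\iint \rho_n w_t^2\chi^2\,dx\,dt=\iint w_x^2\chi\chi_t\,dx\,dt-2\iint w_t w_x\chi\chi_x\,dx\,dt,
\]
whose first right-hand term is already controlled by the previous step.

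The main obstacle has two faces. First, the residual $\iint w^2\chi_x^2\,dx\,dt$ is against Lebesgue measure rather than $m_n$, and so requires a locally uniform $L^\infty$ bound on $w=\partial_t^k U_n$; this is propagated from the locally uniform $L^\infty$ bound on $U_n$ (provided by Lemma \ref{Lpsixbound} and the discussion preceding \eqref{L2bound}) via classical interior parabolic regularity applied on the smooth equation over slightly enlarged cylinders. Second, the cross term $\iint w_t w_x\chi\chi_x$ resists the naive weighted Cauchy--Schwarz, which introduces a factor $\rho_n^{-1}$ that is \emph{not} uniformly bounded in $n$. I would sidestep this by substituting $w_t=w_{xx}/(2\rho_n)$ in the cross term and integrating by parts in $x$, which, together with the smoothness of $\rho_n$, reformulates the cross term in a way that is controllable in terms of the already established $L^2(dx\,dt)$ bound on $w_x$ and the locally uniform $L^\infty$ bound on $w$. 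A nested-cutoff iteration, using a sequence of cutoffs with progressively smaller supports all contained in the larger cylinder, closes the bookkeeping and yields the required estimates uniformly in $n$.
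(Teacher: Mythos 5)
Your skeleton (induction on $k$ using that $\partial_t^k U_n$ solves the same equation, a Caccioppoli estimate for the spatial derivative, then the energy identity for the time derivative with the substitution $w_t=w_{xx}/(2\rho_n)$ in the cross term) matches the paper's. But there are two genuine gaps, both stemming from the same source: you use a \emph{generic} cutoff $\chi$, whereas the paper's argument only closes because the cutoffs $\psi=\psi_n$ are adapted to the measure $m_n$. First, your residual term $\iint w^2\chi_x^2\,dx\,dt$ is against Lebesgue measure, while the inductive hypothesis (and \eqref{L2bound}) only controls $\iint w^2\,m_n(dx)\,dt$; you propose to bridge this with a locally uniform $L^\infty$ bound on $w=\partial_t^kU_n$ obtained from classical interior parabolic regularity. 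That step fails to be uniform in $n$: the interior estimates for $u_{xx}=2\rho_n u_t$ carry constants depending on upper and lower bounds for $\rho_n$, and $\rho_n=dm_n/d\lambda$ degenerates as $n\to\infty$ (it blows up near atoms of $m$ and tends to $0$ on intervals where $m$ vanishes). A uniform $L^\infty$ bound is available probabilistically for $U_n$ itself, but for $k\ge1$ the paper only obtains such bounds \emph{after} this lemma (Lemma \ref{rehnskiold}), precisely from the $L^2$ estimates you are trying to prove, so your route is circular. Second, after substituting $w_t=w_{xx}/(2\rho_n)$ in the cross term and integrating by parts you are left with $\tfrac12\iint(\chi\chi_x/\rho_n)_x\,w_x^2\,dx\,dt$, and for a generic $\chi$ the coefficient $(\chi\chi_x/\rho_n)_x$ involves $\rho_n^{-1}$ and $\rho_n'/\rho_n^2$, which are not uniformly bounded in $n$; it is not controllable by the $L^2$ bound on $w_x$ plus an $L^\infty$ bound on $w$ as you claim.

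The paper's fix for both problems is the choice of $n$-dependent test functions in \eqref{psi} and \eqref{psi2}: $\psi_{xx}$ is taken proportional to $\pm\mu/\mu(I_j)$ on intervals $I_j$ that $m$ charges, so the residual $\iint|\psi_{xx}|u^2\,dx\,dt$ becomes an integral against $\mu(dx)$ and is bounded directly by the inductive hypothesis, with no $L^\infty$ input needed; and $\psi_x$ is taken proportional to $\varphi_j\mu$, so that $\psi_x/\mu=\varphi_j/\int\varphi_j\,d\mu$ is smooth with derivative bounded uniformly in $n$ (since $\int\varphi_j\,d\mu\to\int\varphi_j\,dm>0$). To repair your argument you would need to replace your generic $\chi$ by such measure-adapted cutoffs; as written, the two steps above do not go through.
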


\begin{proof}
We assume, somewhat more generally,
that $U_n$ is any sequence of solutions to \eqref{mpde1}, for the
measure $m_n$, such that \eqref{L2bound} holds for any rectangle.

To simplify the notation, we suppress the dependence on $n$ and
write $u=U_n$ and $\mu=m_n$, respectively.
By construction, $m_n$ has a smooth density, which we also denote by $\mu$.
If $\psi\in C^2_c(\R)$, then using \eqref{mpde1} and integrations by parts we
find that  
\begin{eqnarray}
\label{ip}
\frac{d}{d t}\int_\R \psi u^2\,\mu(dx) &=& \int_\R 2\psi u u_t\,\mu(dx)
= \int_\R \psi uu_{xx} \,dx\\
\notag
&=& -\int_\R \psi  u_x^2\,dx-\int_\R\psi_x u u_x\,dx\\
\notag
&=&  -\int_\R \psi  u_x^2\,dx + \frac{1}{2}\int_\R\psi_{xx} u^2\,dx,
\end{eqnarray}
and thus
\begin{equation}
\label{eq1}
\int_\R\psi u_x^2\,dx = \frac{1}{2}\int_\R\psi_{xx}u^2\,dx -\frac{d}{dt}\int_\R\psi u^2\,\mu(dx).
\end{equation}
If $0<\tO<t_2$, 
integrating \eqref{eq1} for $t\in (\tO,t_2)$ gives
\begin{eqnarray}
\label{eq2}
\int_{\tO}^{t_2}\int_\R \psi u_x^2\,dx\,dt &=& \frac{1}{2}\int_{\tO}^{t_2}\int_\R\psi_{xx}u^2\,dx\,dt +\int_\R\psi(x) u^2(x,\tO)\,\mu(dx) \\
\notag
&&- \int_\R\psi(x) u^2(x,t_2)\,\mu(dx).
\end{eqnarray}
Assume further that $\psi\geq 0$ and integrate again for $\tO\in(\ti/2,\ti)$ with $t_1<t_2$ to obtain
\begin{equation}
\label{eq3}
\frac{\ti}{2}\int_{\ti}^{t_2}\int_\R \psi u_x^2 \,dx\,dt \leq
\frac{\ti}{4}\int_{\ti/2}^{t_2}\int_\R \vert \psi_{xx}\vert u^2 \,dx\,dt
 + \int_{\ti/2}^{\ti}\int_\R \psi u^2 \,\mu(dx)\,dt
\end{equation}
Let $a<b$. We choose $\psi=\psi_n$ depending on $n$ as follows. 
Find $a_1<a_2<a-1$ and $a_4>a_3>b+1$ with $a_1,a_2,a_3,a_4\in \supp m$
and $a_2-a_1>3$, $a_4-a_3>3$. Denote by $I_1,I_2,I_3,I_4$ the disjoint open
intervals  
in $\R\setminus [a,b]$ such that $I_j=(a_j-1,a_j+1)$. Then $m(I_j)>0$ for $j=1,2,3,4$, 
and for $n$ large $\mu(I_j)>\frac{1}{2} m(I_j)$.
Define $\psi$ by
\begin{equation}
\label{psi}
\psi_{xx}=\left\{\begin{array}{cl}
\mu/\mu(I_1) & \mbox{in } I_1\\
-\mu/\mu(I_2) & \mbox{in } I_2\\
-\dc\mu/\mu(I_3) & \mbox{in } I_3\\
\dc\mu/\mu(I_4) & \mbox{in } I_4\\
0 & \mbox{elsewhere}\end{array}\right.
\end{equation}
with $\psi(x)=0$ for $x\leq a_1-1$ and $\dc>0$ to be chosen.
Then 
\begin{equation*}
\psi_x=\left\{\begin{array}{cl}
0 & \mbox{on } (-\infty,a_1-1)\\
1 & \mbox{on }(a_1+1,a_2-1)\\
0 & \mbox{on }(a_2+1,a_3-1)\\
-\dc  & \mbox{on }(a_3+1,a_4-1)\\
0 & \mbox{on }(a_4+1,\infty)\end{array}\right.
\end{equation*}
Hence $\psi$ is constant and larger than $a_2-1-(a_1+1)\geq 1$ on $[a,b]\subset (a_2+1,a_3-1)$, smaller than
$a_2+1-(a_1-1)=a_2-a_1+2$ everywhere,
and by choosing a suitable $\dc$ we have $\psi=0$ on $(a_4+1,\infty)$. Then $\psi\in C^1_c(\R)$,
and $\psi_{xx}$ is bounded and 
continuous everywhere but at a finite number of points. By an approximation
argument (or extending \eqref{ip}),  
the inequality \eqref{eq3} holds also for such functions. Moreover, $\dc$ is
uniformly bounded for $n$ large. 
Thus, for large $n$, $\vert\psi_{xx}\vert\leq  C \mu 1_{[a_0,a_5]}$ and $\psi\leq C1_{[a_0,a_5]}$ with $a_0:=a_1-1$, $a_5:=a_4+1$
and $C$ independent of $n$. Thus \eqref{eq3} implies,
by our assumption \eqref{L2bound},
\[\frac{\ti}{2}\int_{\ti}^{t_2}\int_a^b u_x^2 \,dx\,dt \leq
C\int_{\ti/2}^{t_2}\int_{a_0}^{a_5}  u^2 \mu(dx)\,dt
 +C \int_{\ti/2}^{\ti}\int_{a_0}^{a_5}  u^2 \,\mu(dx)\,dt\leq C,\]
i.e. for any $[a,b]\times [\ti,t_2]\subset\R\times(0,\infty)$,
\begin{equation}
\label{c}
\int_{\ti}^{t_2}\int_a^b u_x^2\,dx\,dt\leq C
\end{equation}
with $C$ independent of $n$ (but depending on $a,b,\ti,t_2$). 
This is \eqref{estimate1} for $k=0$.

Again, let $\psi\in C_c^2(\R)$ with $\psi\geq 0$. Then 
\begin{eqnarray*}
\frac{d}{d t}\int_\R \psi u_x^2\,dx &=& 2\int_\R \psi u_x u_{xt}\,dx = -2\int_\R(\psi u_x)_x u_t\,dx\\
&=& -2\int_\R \psi u_{xx} u_t\,dx -2\int_\R \psi_x u_xu_t\,dx\\
&=& -4\int_\R \psi u^2_t\,\mu(dx) - \int_\R\frac{\psi_xu_x u_{xx}}{\mu}\,dx\\
&=&  -4\int_\R \psi u^2_t\,\mu(dx) +\frac{1}{2} \int_\R \frpsimu_x u_x^2\,dx
\end{eqnarray*}
or 
\begin{equation}
\label{eq5}
4\int_\R \psi u^2_t\,\mu(dx) = -\frac{d}{d t}\int_\R \psi u_x^2\,dx 
+\frac{1}{2} \int_\R \frpsimu_x u_x^2\,dx.
\end{equation}
Integrate for $t\in (\tO,t_2)$ to obtain
\begin{eqnarray}
\label{eq5integrated}
4\int_{\tO}^{t_2}\int_\R \psi u^2_t\,\mu(dx)\,dt &=& \int_\R \psi u_x^2(x,\tO)\,dx
 -\int_\R \psi u_x^2(x,t_2)\,dx\\
\notag
&&
+\frac{1}{2} \int_{\tO}^{t_2}\int_\R \frpsimu_x u_x^2\,dx\,dt.
\end{eqnarray}
Integrating once more for $\tO\in (\ti/2,\ti)$ yields
\begin{equation}
\label{eq6}
2\ti\int_{\ti}^{t_2}\int_\R \psi u^2_t\,\mu(dx)\,dt  \leq
\int_{\ti/2}^{\ti}\int_\R \psi u_x^2\,dx\,dt +\frac{\ti}{4}
\int_{\ti/2}^{t_2}\int_\R
\left\vert \frpsimu_x\right\vert u_x^2\,dx\,dt.
\end{equation}
We choose again $\psi=\psi_n$ depending on $n$. This time, with $a<b$ given and notations as above,
let $\varphi_1\in C^\infty_c(I_1)$ and $\varphi_4\in C^\infty_c(I_4)$ with $\varphi_1,\varphi_4\geq 0$ and 
$\varphi_1(a_1),\varphi_4(a_4)>0$. Then $\int \varphi_j\,\mu(dx)\to
\int\varphi_j \,m(dx)>0$ for $j=1,4$ 
as $n\to\infty$.
Define $\psi=\psi_n\in C_c^\infty(\R)$ by 
\begin{equation}
\label{psi2}
\psi_x=\left\{\begin{array}{cl}
\frac{\varphi_1 \mu}{\int\varphi_1\mu(dx)} & \mbox{on }I_1\\
-\frac{\varphi_4 \mu}{\int\varphi_4\mu(dx)} & \mbox{on }I_4\\
0 & \mbox{elsewhere}.\end{array}\right.
\end{equation}
Then $\psi=1$ on $[a_1+1,a_4-1]\supset [a,b]$. Moreover, $0\leq \psi\leq 1_{[a_0,a_5]}$ and $\vert (\frac{\psi_x}{\mu})_x\vert\leq C1_{[a_0,a_5]}$
(for $n$ large) so \eqref{eq6} implies
\[2\ti\int_{\ti}^{t_2}\int_a^b u^2_t\,\mu(dx)\,dt  \leq \int_{\ti/2}^{\ti}\int_{a_0}^{a_5} u_x^2\,dx\,dt +C \int_{\ti/2}^{t_2}\int_{a_0}^{a_5} u_x^2\,dx\,dt.\]
Consequently, \eqref{c} yields, for any $[a,b]\times[\ti,t_2]\subset \R\times (0,\infty)$,
\begin{equation}
\label{eq7}
\int_{\ti}^{t_2}\int_a^b u^2_t\,\mu(dx)\,dt\leq C
\end{equation}
with $C$ independent of $n$.

We have used \eqref{L2bound} to show that \eqref{c} and \eqref{eq7} hold.
Since $u_t=\frac{\partial U_n}{\partial t}$ is another solution of the equation (with $m=m_n$),
the estimates \eqref{estimate2} and \eqref{estimate1} hold by induction
for all $k\ge0$. 
\end{proof}

\begin{lemma}
\label{enkelintegraler}
For each fixed integer $k\geq 0$,
\begin{equation}
\label{enkelintegral1}
\sup_n\int_a^b\left(\frac{\partial^{k} }{\partial t^k}U_n(x,t)\right)^2\,m_n(dx) <\infty
\end{equation}
and
\begin{equation}
\label{enkelintegral2}
\sup_n\int_a^b \left(\frac{\partial^{k+1} }{\partial x\partial t^k}U_n(x,t)\right)^2\,dx <\infty
\end{equation}
uniformly in $t\in[t_3,t_4]$, for every rectangle $[a,b]\times [t_3,t_4]\subset
\R\times (0,\infty)$.
\end{lemma}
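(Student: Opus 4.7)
The plan is to reuse the integration-by-parts identities derived in the proof of Lemma~\ref{dubbelintegraler}, but in their pointwise-in-$t$ form: instead of averaging a second time over the auxiliary time variable, I would discard a favorably-signed quadratic term and bound the quantity of interest at the specific time $t$. Because $\partial_t^j U_n$ is itself a solution of \eqref{mpde1} for the measure $m_n$ (obtained by differentiating the equation in $t$), I would prove the two estimates for $k=0$ for a generic solution, and then apply the same argument with $u$ replaced by $v=\partial_t^{k}U_n$ for general $k$; the role played by \eqref{L2bound} at the base step is taken over by \eqref{estimate2} at level $k$, and the role of \eqref{c} by \eqref{estimate1} at level $k$. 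Following the proof of Lemma~\ref{dubbelintegraler}, I would suppress $n$ and write $u=U_n$, $\mu=m_n$.

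For \eqref{enkelintegral1} at $k=0$, I would integrate \eqref{eq1} over $s\in(\tO,t)$, where $\tO\in(0,t_3)$ is auxiliary and $t\in[t_3,t_4]$ is arbitrary, to obtain
\begin{equation*}
\int_\R \psi u^2(x,t)\,\mu(dx) + \int_{\tO}^{t}\!\int_\R \psi u_x^2\,dx\,ds = \int_\R \psi u^2(x,\tO)\,\mu(dx) + \tfrac12\int_{\tO}^{t}\!\int_\R \psi_{xx}u^2\,dx\,ds.
\end{equation*}
Since $\psi\ge 0$, the gradient term on the left is nonnegative and can be dropped. I would then choose $\psi=\psi_n$ exactly as in \eqref{psi}, so that $\psi\ge 1_{[a,b]}$, $\psi$ is supported in $[a_0,a_5]$, and $|\psi_{xx}|\le C\mu 1_{[a_0,a_5]}$ with $C$ independent of $n$. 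Averaging the surviving inequality over $\tO\in(t_3/4,t_3/2)$ and applying \eqref{L2bound} to both terms of the right-hand side then bounds $\int_a^b u^2(x,t)\,\mu(dx)$ uniformly in $n$ and $t\in[t_3,t_4]$.

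For \eqref{enkelintegral2} at $k=0$, I would apply \eqref{eq5integrated} over the same range and rearrange to
\begin{equation*}
\int_\R \psi u_x^2(x,t)\,dx + 4\int_{\tO}^{t}\!\int_\R \psi u_t^2\,\mu(dx)\,ds = \int_\R \psi u_x^2(x,\tO)\,dx + \tfrac12\int_{\tO}^{t}\!\int_\R \frpsimu_x u_x^2\,dx\,ds.
\end{equation*}
Again I would drop the nonnegative term involving $u_t^2$, take $\psi=\psi_n$ as in \eqref{psi2} so that $\psi\ge 1_{[a,b]}$, $0\le\psi\le 1_{[a_0,a_5]}$, and $|(\psi_x/\mu)_x|\le C 1_{[a_0,a_5]}$ uniformly in $n$, then conclude by averaging over $\tO$ and invoking \eqref{c} on both terms of the right-hand side.

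The hard part is really only packaging: neither identity is new, and the sign of the discarded quadratic term delivers the pointwise-in-$t$ estimate essentially for free. What does need care is that the cutoff constructions \eqref{psi} and \eqref{psi2} retain their $n$-uniform structural bounds on $|\psi_{xx}|/\mu$ and on $|(\psi_x/\mu)_x|$ when used here, and that $\psi_n$ is independent of $t$ so that \eqref{eq1} and \eqref{eq5integrated} apply verbatim at two arbitrary times $\tO<t$. Both points were already taken care of when these cutoffs were constructed in Lemma~\ref{dubbelintegraler}, so no new construction is required.
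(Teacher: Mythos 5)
Your proposal is correct and follows essentially the same route as the paper: both rest on the identities \eqref{eq2} and \eqref{eq5integrated} with the cutoffs \eqref{psi} and \eqref{psi2}, an average over the auxiliary initial time, and the observation that $\partial_t^k U_n$ is again a solution. The only (immaterial) difference is bookkeeping: you drop the favorably-signed quadratic term and bound $\int\psi u^2(\cdot,t)\,\mu(dx)$ directly, whereas the paper bounds the oscillation $\bigl|\int\psi u^2(\cdot,t_1)\,\mu(dx)-\int\psi u^2(\cdot,t_2)\,\mu(dx)\bigr|$ and combines it with the boundedness of the time average.
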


\begin{proof}
For simplicity of notation we consider the case $k=0$ (the general case being completely 
analogous).
Fix $a<b$, let $\psi$ be defined by \eqref{psi} and with compact support, and fix $[t_3,t_4]\subset (0,\infty)$. 
For any $t_1,t_2\in [t_3,t_4]$, \eqref{eq2} together with \eqref{estimate2} and \eqref{estimate1} yield
\begin{align*}
& \left\vert\int_\R\psi (x)u^2(x,t_1)\,\mu(dx)- \int_\R\psi
  (x)u^2(x,t_2)\,\mu(dx)\right\vert \\ 
&\qquad
 \leq\int_{t_3}^{t_4}\int_\R \psi u_x^2\,dx\,dt + \frac{1}{2}
 \int_{t_3}^{t_4} \int_a^b \vert \psi_{xx}\vert u^2\,dx\,dt \\
&\qquad
 \leq C\int_{t_3}^{t_4}\int_{a_0}^{a_5} u_x^2\, dx\,dt + C\int_{t_3}^{t_4}\int_{a_0}^{a_5} u^2\,\mu(dx)\,dt\leq C.
\end{align*}
Since further, by \eqref{estimate2} again, 
\[\int_{t_3}^{t_4}\int_\R \psi u^2\,\mu(dx)\,dt\leq C \int_{t_3}^{t_4}\int_{a_0}^{a_5} u^2\,\mu(dx)\,dt\leq C,\]
it follows that, uniformly for $t\in[t_3,t_4]$,
\[\int_\R \psi u^2\,\mu(dx)\leq C\]
and thus, for any $[a,b]$ and $[t_3,t_4]\subset (0,\infty)$ and every $t\in [t_3,t_4]$
\begin{equation}
\label{eq8}
\int_a^b u^2(x,t)\mu(dx) \leq C,
\end{equation}
which is \eqref{enkelintegral1} for $k=0$.

Similarly, inserting $\psi$ defined as in \eqref{psi2} and with compact support in \eqref{eq5integrated} gives,
for any $t_1,t_2\in[t_3,t_4]$,
\begin{align*}
& \left\vert \int_\R \psi(x) u_x^2(x,t_1)\,dx - \int_\R \psi(x) u_x^2(x,t_2)\,dx\right\vert  \\
&\qquad 
\leq4
\int_{t_3}^{t_4}\int_\R \psi u_t^2\,\mu(dx)\,dt + \int_{t_3}^{t_4}\int_\R \left\vert \frpsimu_x\right\vert u_x^2\,dx\,dt\\
&\qquad
\leq C\int_{t_3}^{t_4}\int_{a_0}^{a_5}  u_t^2\,\mu(dx)\,dt + \int_{t_3}^{t_4}\int_{a_0}^{a_5} u_x^2\,dx\,dt\leq C.
\end{align*}
Since further,  by \eqref{estimate1}, 
\[\int_{t_3}^{t_4}\int_\R \psi  u_x^2\,dx\,dt\leq \int_{t_3}^{t_4}\int_{a_0}^{a_5}u_x^2\,dx\,dt\leq C,\]
it follows that for every $t\in[t_3,t_4]$,
\[\int_\R \psi u_x^2(x,t)\,dx \leq C.\]
Consequently, 
\begin{equation}
\label{eq9}
\int_a^b  u_x^2(x,t)\,dx \leq C
\end{equation}
uniformly in $n$ and $t\in[t_3,t_4]$, for every $[a,b]\times[t_3,t_4]\subset \R\times (0,\infty)$, so 
\eqref{enkelintegral2} holds for $k=0$.
\end{proof}

\begin{lemma}
\label{rehnskiold}
The functions $\frac{\partial^{k}U_n }{\partial t^k}$ and $\frac{\partial^{k+1} U_n}{\partial x\partial t^k}$ with $k\geq 0$
are locally bounded on $\R\times (0,\infty)$, uniformly in $n$. Thus $U_n$ and all its time derivatives are locally Lipschitz 
on $\R\times (0,\infty)$, uniformly in $n$.
\end{lemma}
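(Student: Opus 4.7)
The goal is to establish uniform-in-$n$ pointwise (i.e.\ $W^{1,\infty}_{\mathrm{loc}}$) bounds on $\partial_t^k U_n$ and $\partial_x\partial_t^k U_n$ on any rectangle $[a,b]\times[t_3,t_4]\subset\R\times(0,\infty)$, whence the local Lipschitz conclusion follows at once from the fundamental theorem of calculus in each variable separately. The inputs will be the $L^2$ bounds \eqref{enkelintegral1}--\eqref{enkelintegral2} of Lemma~\ref{enkelintegraler} and the PDE itself.

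\emph{Reduction.} Using \eqref{unbounded_support}, enlarge $[a,b]$ to $[A,B]$ so that both $[A,a]$ and $[b,B]$ contain points of $\supp m$; choose $[a_1,a_2]\subset[A,a]$ with $a_1,a_2$ continuity points of $m$ and $m([a_1,a_2])>0$. Since $m_n\to m$ vaguely and the $m_n$ are smoothings of $m$, for $n$ large we have $m_n([A,B])\leq C$ and $m_n([a_1,a_2])\geq c>0$ uniformly in $n$.

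\emph{Step 1 (time derivatives).} Fix $k\geq 0$ and set $f:=\partial_t^k U_n$. For each $t\in[t_3,t_4]$, \eqref{enkelintegral1} combined with Chebyshev's inequality produces some $x^{*}=x^{*}(n,t)\in[a_1,a_2]$ with $|f(x^{*},t)|^2\leq C/c$. Then for every $x\in[a,b]$, the fundamental theorem of calculus and Cauchy--Schwarz give
\[|f(x,t)|\leq|f(x^{*},t)|+(B-A)^{1/2}\Bigl(\int_A^B f_x(y,t)^2\,dy\Bigr)^{1/2},\]
which is uniformly bounded in $n$ and $t\in[t_3,t_4]$ by \eqref{enkelintegral2}.

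\emph{Step 2 (mixed derivatives).} Since $m_n$ has a smooth strictly positive density and $U_n$ is a smooth classical solution, differentiating \eqref{mpde1} $k$ times in $t$ yields $f_{xx}=2m_n f_t$ with $f_t=\partial_t^{k+1}U_n$. Step 1 applied at level $k+1$ provides a uniform pointwise bound $|f_t|\leq M$ on $[A,B]\times[t_3,t_4]$, so for any $x_1,x_2\in[A,B]$,
\[|f_x(x_2,t)-f_x(x_1,t)|=2\Bigl|\int_{(x_1,x_2]}f_t(s,t)\,m_n(ds)\Bigr|\leq 2M\,m_n([A,B])\leq C'.\]
Combined with \eqref{enkelintegral2}, which by Chebyshev provides some $x^{**}\in[A,B]$ with $|f_x(x^{**},t)|\leq (C/(B-A))^{1/2}$, this yields the required uniform pointwise bound on $f_x$ over $[a,b]$. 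Local Lipschitz continuity of each $\partial_t^k U_n$ then follows immediately, since we control $\partial_x\partial_t^k U_n$ by Step 2 and $\partial_t\partial_t^k U_n=\partial_t^{k+1}U_n$ by Step 1.

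The main obstacle is Step 2: we have no direct $L^2$ control on $f_{xx}$, because the density of $m_n$ may be unbounded, so a naive 1D Sobolev embedding $W^{1,2}\hookrightarrow L^\infty$ applied to $f_x(\cdot,t)$ will not work. The key observation that closes the argument is to re-interpret $f_{xx}$ via the PDE as the measure $2m_n f_t$ and exploit the already-established pointwise bound on $f_t$ from Step 1 at the next level of $k$, which makes the induction go through for all $k\geq 0$.
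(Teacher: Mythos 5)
Your proof is correct and follows essentially the same route as the paper: both arguments bound the oscillation of $\partial_t^k U_n$ (resp.\ $\partial_x\partial_t^k U_n$) over a rectangle via the fundamental theorem of calculus, Lemma~\ref{enkelintegraler}, and the identity $f_{xx}=2m_nf_t$, and then anchor the pointwise bound at a point supplied by the $L^2$ estimates together with a uniform lower bound $m_n(I)\geq c>0$ on a suitable interval obtained from \eqref{unbounded_support} and vague convergence. The only cosmetic difference is that the paper controls $\bigl|\int_x^y 2f_t\,m_n(dz)\bigr|$ by Cauchy--Schwarz against the $L^2(m_n)$ bound on $f_t$ rather than by your pointwise bound on $f_t$ at level $k+1$; both are valid.
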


\begin{proof}
First note that it follows from \eqref{enkelintegral2} that $u:=U_n$ is
locally H\"older(1/2)-continuous in the spatial variable, uniformly in $n$. Indeed,
to see this, note that by the Cauchy--Schwarz inequality
\begin{equation}
\label{stenbock}
\vert u(y,t)-u(x,t)\vert = \left\vert \int_x^y u_z(z,t)\,dz\right\vert
\leq C\sqrt{\vert y-x\vert}
\end{equation}
uniformly in $x,y\in[a,b]$ and $t\in [t_3,t_4]$, for every rectangle $[a,b]\times [t_3,t_4]\subset
\R\times (0,\infty)$.

Now, let $I=[a,b]\subset\R$ be a given non-empty interval. 
By \eqref{unbounded_support}, we may 
increase $I$ so that $\eta:=m(I^\circ)>0$. 
It follows from the vague convergence of $\mu=m_n$ to $m$ that $\mu(I)\geq \eta/2$ for 
sufficiently large $n$. Pick $x_0 \in I$ and $t_3,t_4\in (0,\infty)$, and note that for $t\in (t_3,t_4)$
the local H\"older continuity of $u$ implies that
\[ \vert u(x,t)-u(x_0,t)\vert \leq  C\sqrt{b-a}\]
for all $x\in I$. It therefore follows from \eqref{enkelintegral1} that 
\begin{equation*}
\vert u(x_0,t)\vert\leq  C\sqrt{b-a}+ \sqrt{2C/\eta}
\end{equation*}
for $n$ sufficiently large. Consequently, $u$ is locally bounded uniformly in $n$.
The case of time derivatives of $u$ is completely analogous.

Similarly, the Cauchy--Schwarz inequality applied to $u_x=\frac{\partial U_n}{\partial x}$ yields
\begin{eqnarray}
\label{mazepa}
\vert u_x(y,t)-u_x(x,t)\vert &=& \left\vert \int_x^y u_{zz}(z,t)\,dz\right\vert 
= \left\vert \int_x^y 2u_{t}(z,t)\mu(dz)\right\vert\\
\notag
&\leq& 2\left(\int_x^y u_t^2(z,t)\,\mu(dz)\right)^{1/2}
\left(\mu(x,y)\right)^{1/2}\\
\notag&\leq& C \left(\mu(a,b)\right)^{1/2}\leq C,
\end{eqnarray}
where the last inequality holds since $\limsup_{n\to\infty}\mu(a,b)\leq m[a,b]<\infty$.
Note that \eqref{mazepa} holds
uniformly in $x,y\in[a,b]$ and $t\in [t_3,t_4]$, for every rectangle $[a,b]\times [t_3,t_4]\subset
\R\times (0,\infty)$. 
Together with \eqref{enkelintegral2} for $k=0$, this gives the desired local bound of $u_x=\frac{\partial U_n}{\partial x}$.
The case of time derivatives of $u_x$ is completely analogous.
\end{proof}

\begin{proof}[Proof of Theorem~\ref{regularity}.]
Since $m_n\to m$ vaguely as $n\to\infty$, $X^{n,x}_t\to X^x_t$ almost surely by \ref{vaguelim} in 
Section ~\ref{S:construction}. 
Arguing as in the proof of Theorem~\ref{convexity},
$U_n(x,t) =\E g(X^{n,x}_t) \to \E g(X^x_t)=U(x,t)$ pointwise as $n\to\infty$.
By Lemma~\ref{rehnskiold} and the Arzela--Ascoli theorem,
$U$ and its partial derivatives with respect to time exist and they are
locally Lipschitz continuous.
\end{proof}


\begin{thebibliography}{99}

\bibitem{A}
Amir, M.,
Sticky Brownian motion as the strong limit of a sequence of random walks.
{\em Stochastic Process. Appl}. 39 (1991), no. 2, 221--237.

\bibitem{EH} 
Ekstr\"om, E. and Hobson, D., 
Recovering a time-homogeneous stock price process from
perpetual option prices. \emph{Ann. Appl. Probab.} 21 (2011), no. 3, 1102--1135.

\bibitem{EHJT} 
Ekstr\"om, E., Hobson, D., Janson, S. and Tysk, J.,
Can time-homogeneous diffusions produce any distribution?
To appear in {\em Probability Theory and Related Fields} (2012).

\bibitem{F}
Friedman, A., 
{\em Stochastic Differential Equations and Applications. Vol. 1.} Probability and Mathematical Statistics, Vol. 28. 
Academic Press, New York--London, 1975.

\bibitem{Gut}
Gut, A.,
\emph{Probability: A Graduate Course},
Springer, New York, 2005.
Corrected 2nd printing 2007.

\bibitem{JT}
Janson, S. and Tysk, J., 
Volatility time and properties of option prices. 
\emph{Ann. Appl. Probab.} 13 (2003), no. 3, 890--913.

%\bibitem{Ka}
%Kallenberg, O.,
%\emph{Foundations of Modern Probability}. 2nd ed., Springer, New York, 2002.

\bibitem{K} Knight, F. B., 
Characterization of the Levy measures of inverse local times of gap diffusion. 
In: 
\emph{Seminar on Stochastic Processes}, 1981, Birkh\"auser, Boston, Mass.,
53--78, 1981. 

\bibitem{KW} Kotani, S. and Watanabe, S., Krein's spectral theory of strings and generalized diffusion processes.
In: \emph{Functional Analysis in Markov Processes}, Lecture Notes in Math. 923, Springer, Berlin, 1982, 235--259.

\bibitem{L}
Lieberman, G. 
\emph{Second Order Parabolic Differential Equations}. 
World Scientific Publishing Co., Inc., River Edge, NJ, 1996.

\bibitem{M}
Monroe, I.,
 Using additive functionals to embed preassigned distributions in
symmetric stable processes. 
\emph{Trans. Amer. Math. Soc.} 163 (1972), 131--146.

\bibitem{RY}
Revuz, D. and Yor, M.,
\emph{Continuous Martingales and Brownian Motion}. 
Third edition. 
%Grundlehren der Mathematischen Wissenschaften, 293. 
Springer-Verlag, Berlin, 1999.

\end{thebibliography}
\end{document}